\title[Decoding Rauzy Induction Effectively]{Decoding Rauzy Induction: An Effective Answer to Bufetov's Question}
\author[J. Fickenscher]{Jon Fickenscher}
\email{jonfick@princeton.edu}
\address{Fine Hall, Washington Road, Princeton, NJ, 08544, USA.}
\date{\today}
\dedicatory{This paper is dedicated to my wonderful advisor and mentor, William A.\ Veech.}
\subjclass[2010]{37E05, 05A18, 06A06}
\keywords{Interval Exchange Transformation, Rauzy Induction, Algorithm}
\newtheorem{theo}{Theorem}[section]
\newtheorem{lemm}[theo]{Lemma}
\newtheorem{coro}[theo]{Corollary}
\newtheorem{prop}[theo]{Proposition}
\newtheorem{algo}{Algorithm}
\newtheorem{ques}{Question}
\newtheorem{main}{Main Theorem}
\newtheorem{defi}[theo]{Definition}
\newtheorem{rema}[theo]{Remark}
\newtheorem{exam}[theo]{Example}
\theoremstyle{definition}
\newcommand{\AAA}{\mathcal{A}}
\newcommand{\BBB}{\mathcal{B}}
\newcommand{\LLL}{\mathcal{L}}
\newcommand{\NNN}{\mathcal{N}}
\newcommand{\QQQ}{\mathcal{Q}}
\newcommand{\RRR}{\mathcal{R}}
\newcommand{\SSS}{\mathcal{S}}
\newcommand{\NN}{\mathbb{N}}
\newcommand{\ZZ}{\mathbb{Z}}
\newcommand{\INTi}[1]{\ensuremath{\mbox{\textup{\textlbrackdbl}} #1 \mbox{\textup{\textrbrackdbl}}}}
\newcommand{\INTio}[1]{\ensuremath{\mbox{\textup{\textlbrackdbl}} #1 \mbox{\,\textup{\textlbrackdbl}}}}
\newcommand{\PAIRS}[1]{\mathbf{irr}(#1)}
\newcommand{\PERM}[1]{\mathfrak{S}_{#1}}
\newcommand{\IRR}[1]{\PERM{#1}^{0}}
\newcommand{\ID}[1]{\mathrm{id}_{#1}}
\newcommand{\mtrx}[1]{\left(\begin{matrix} #1 \end{matrix}\right)}
\newcommand{\stepref}[1]{\texttt{\color{red}\normalfont [\ref{#1}]}}
\newcommand{\newstepref}[1]{\texttt{\color{purple}\normalfont $\langle$\ref{#1}$\rangle$}}
\numberwithin{equation}{section}
\renewcommand{\theequation}{\textbf{\color{blue}\arabic{section}.\arabic{equation}}}
\begin{document}

\begin{abstract}
	A typical interval exchange transformation has an infinite sequence of matrices
		associated to it by successive iterations of Rauzy induction.
	In 2010, W.\ A.\ Veech answered a question of A.\ Bufetov by showing
		that the interval exchange itself may be recovered
		from these matrices and must be unique up to topological conjugation.
	In this work, we will improve upon these results
		by providing an algorithm to determine the initial transformation from a sufficiently long finite subsequence of these
		matrices.
	We also show the defined length to be necessary by constructing finite sequences of Rauzy induction
		with multiple distinct (even up to conjugacy) initial transformations.
\end{abstract}

\maketitle


\section{Introduction}

An $n$-interval exchange transformation ($n$-IET) is an invertible map $T :[0,L) \to [0,L)$ that divides the interval
	$[0,L)$ into $n$ sub-intervals whose lengths are given by a length vector $\lambda = (\lambda_1,\dots,\lambda_n)$ and then places them back in a different order
	as prescribed by a permutation $\pi$.
For almost all\footnote{For every \emph{irreducible} $\pi$ (Definition \ref{DEF_PERM_IRR}) and Lebesgue almost every $\lambda$.} $n$-IETs
	Rauzy's induction \cite{cRauzy} results in a new $n$-IET $T'$ and is defined by the first return
	of $T$ on an appropriately chosen subinterval $[0,L')$.
More specifically $L'$ is $L$ minus the minimum length of the rightmost subinterval before and after application of $T$.
The  information $\pi'$ and $\lambda'$ that defines $T'$ may be directly
	computed from $\pi$ and $\lambda$ and the relationships depend on the \emph{type} of the induction,
	meaning whether the rightmost before or after the application of $T$ had smallest length.
There is an invertible matrix $A$, called the \emph{visitation matrix}, that counts how often each subintevral of $T'$
	visits the subintervals of $T$ before returning to $[0,L')$.

If we apply Rauzy's induction to $T$ infinitely many times, we arrive at a sequence of such matrices
	$$
		A_1, A_2, A_3, A_4, A_5, A_6,\dots
	$$
so that $A_i$ is derived from the $i^{th}$ step of induction.

Given this infinite sequence of matrices and the initial permutation $\pi$, we may recover the originial $n$-IET
	up to topological conjugacy \cite{cVeechConj}.
If this initial $n$-IET is uniquely ergodic, meaning Lebesgue is the only $T$-invariant measure up to scaling,
	then the initial IET is itself unique given the initial length $L$.

\begin{ques}[A. Bufetov]
	Given only the sequence of $A_i$'s, can an initial permutation be determined and is this permutation unique?
\end{ques}

In \cite{cVeechBUF}, W. A. Veech gave an affirmative answer, and this result was used in \cite{cBufetov} and then \cite{cBufetov+Solomyak}.
However, Veech's proof was by contradiction and did not directly compute $\pi$.

As a follow-up to his first question, A. Bufetov wondered if this result would hold with even less information.
More precisely, suppose we know the \emph{products}
	$$
		\underbrace{A_1A_2\cdots A_{m_1}}_{B_1}, \underbrace{A_{m_1+1} A_{m_1+2}\cdots A_{m_2}}_{B_2}, \dots
	$$
for some increasing sequence $1\leq m_1 < m_2 < m_3 < \dots$, without knowing the individual matrices within each product.
Such products arise naturally when we apply Zorich's acceleration of Rauzy induction  \cite{cZorich}
	 or by inducing on other intervals.

\begin{ques}[A. Bufetov]
	Given only a sequence of $B_k$'s, where each $B_i$ is a product of $A_i$'s,
		can an initial permutation still be determined and is it still unique?
\end{ques}

The author gave an affirmative answer to this more general question in \cite{cFick}.
However, just as in \cite{cVeechBUF} the proof cannot be used to find the initial permutation.

The main result of this paper provides an effective answer to the first question and
	to the second question when restricted to Zorich's induction.\footnote{We slightly relax the definition of Zorich's induction for this paper.
	 The details will be provided in Sections \ref{SSEC_ZORICH_DEF}--\ref{SSEC_ZORICH_DEF2}.}
Before stating the main result, we note that a finite sequence of matrices given by Rauzy or  Zorich induction
	has a form of length which we will call in this paper $C$-complete, where $C$ is a positive integer.
We will be more precise with the definition later, but we note here that for a
	prescribed value of $C$ we can find an $N$ such that
	the finite sequence $\widetilde{A}_1,\dots,\widetilde{A}_N$ of initial matrices
	is $C$-complete.
(Here, use $\widetilde{A}_i$ to represent a matrix obtained by Rauzy or Zorich induction.)
Most importantly, we may determine this value of $N$ given only the matrices.
	
	\begin{main}
		If matrices
			$\widetilde{A}_1$,$\widetilde{A}_2$,\ldots are visitation matrices
		associated to moves of Rauzy/Zorich induction on an i.d.o.c. $n$-IET,
		consider the first $N$ matrices that are $C$-complete, $C \geq \log_2(n+1) - 1$.
		Then there is a unique permutation $\pi$ that
			can define any $n$-IET whose first $N$ steps can be given by
				$\widetilde{A}_1,\dots,\widetilde{A}_N$.
		Moreover, there is an explicit algorithm to determine $\pi$ from these $N$ matrices.
	\end{main}
	
	This statement is more precisely stated after proper notation has been introduced as
		Theorem \ref{THM_ZPAIR} and Corollary \ref{COR_ZPERM}.
	The main tool in the proofs is Algorithm \ref{ALGO_ZORICH} which builds information
		by starting with the $N^{th}$ matrix and iterating backwards to the first matrix.
	This algorithm is also what makes this result effective, as its success yields the unique permutation.
	
	As our second result, we show that the length we need for our first theorem is sharp by
		constructing sequences of matrices given by Rauzy induction
		that allow for more than one initial permutation and are $C$-complete with
		$$
			C = \lfloor\log_2(n) - 1\rfloor = \lceil \log_2(n+1)-2\rceil
		$$
	Note that $\lceil \log_2(n+1) -1\rceil$ is the minimum integer $C$ such that uniqueness is ensured by the first theorem.

	\begin{main}\label{MAIN2}
		For each $n\geq 4$ there exist a sequence
			$A_1,\dots, A_N$ of matrices given by Rauzy induction that is
				$C$-complete, $C = \lfloor \log_2(n)\rfloor -1\rfloor$,
				and the initial permutation is not unique.
	\end{main}
	
	Main Theorem \ref{MAIN2} is restated as Theorem \ref{THM_NEED_C} after we establish more notation.
	We note that matrices given by Rauzy induction a priori provide more information than
		those provided by Zorich induction.

	\subsection{Outline of Paper}

	Aside from defining our inductions and corresponding matrices,
		we will not discuss the underlying dynamics of IETs.
	We will work primarily with the notation from \cite{cViana} to define
		IETs and induction.
	This notation defines an IET not by a permutation $\pi$ on $\{1,\dots,n\}$
		but by a pair of bijections $p_t: \AAA \to \{1,\dots, n\}$ for $t\in \{0,1\}$
		for a alphabet $\AAA$ of size $n$.
	The translation from $(p_0,p_1)$ to $\pi$ and vice-versa will be discussed.
	We note here that there is an inherent ambiguity in notation for pairs:
		if we have an infinite sequence of matrices for induction
		that can be obtained from an IET with initial pair $(p_0,p_1)$ then
		this sequence may also be obtained by an IET with initial pair $(p_1,p_0)$
		(which we call the \emph{inverse} of $(p_0,p_1)$).
	This ambiguity does not arise when we take induction on permutations.
	
	In Section \ref{SEC_DEF} we establish our standard definitions and notations.
	We define the matrices from Rauzy induction first for pairs in Section \ref{SSEC_RAUZY_DEF}
		and then for permutations in Section \ref{SSEC_RAUZY_DEF2}.
	We define here what it means to be $C$-complete.
	In \ref{SSEC_PERM+PAIRS} we discuss how to explicitly translate between pair and permutation notation, leading
		to Lemma \ref{LEM_LIFTING_UP} which makes these translations precise.
	In Sections \ref{SSEC_ZORICH_DEF} and \ref{SSEC_ZORICH_DEF2} we define our generalization of Zorich's
		induction, first for pairs and then for permutations.
	The Zorich analogue of Lemma \ref{LEM_LIFTING_UP} is given by Corollary \ref{COR_LIFTING_ZOR}.
	Section \ref{SSEC_BREAKUP} discusses how to modify a sequence of Zorich induction matrices
		by possibly replacing some matrices with products in order to prepare for our main algorithm.
		
	Section \ref{SEC_MAIN1} is dedicated to proving our first main theorem, which is stated correctly for
		pairs as Theorem \ref{THM_ZPAIR} and for permutations as Corollary \ref{COR_ZPERM}.
	To do this, we define in Section \ref{SSEC_POPAIRS} \emph{partially ordered pairs} which we use to encode
		what information we currently know about possible pairs we could obtain by induction.
	We then turn to Algorithm \ref{ALGO_ZORICH} in Section \ref{SSEC_ALGO_DEF}.
	In particular, for a move from (unknown) pair $(p_0^{(i)},p_1^{(i)})$ to $(p_0^{(i+1)},p_1^{(i+1)})$ by a step
		of induction described by $\widetilde{A}_i$, we show how to take what we know about the $(i+1)^{st}$ pair
			and $\widetilde{A}_i$ to learn more about the $i^{th}$ pair.
	Using this algorithm, we prove in Section \ref{SSEC_MAIN1_PROOF}
		that if we take the first $N$ matrices that are $C$-complete
		for $C$ large enough then Algorithm \ref{ALGO_ZORICH} will produce
		one unique answer (up to taking the inverse).
	We then apply our explicit translations via Corollary \ref{COR_LIFTING_ZOR}
		to prove our main result for permutations.
	
	We then in Section \ref{SEC_MAIN2} show that Theorem \ref{THM_ZPAIR} and Corollary \ref{COR_ZPERM}
		provide sharp lower bounds on the $C$-complete requirement.
	After addressing smaller alphabet sizes, this is stated as Theorem \ref{THM_NEED_C}.
	In this result, we construct finite sequences of matrices that are $C$-complete,
		where $C$ is just below the bound given by our first main result,
		such that two distinct pairs (meaning not the same up to inverses)
			exist that may start this sequence of induction.
	Two important constructions arise in the proof of Lemma \ref{LEM_CUTTING_UNKNOWNS} as they describe
		sequences of induction that help ensure completeness
		while minimizing information gained.
		
	\subsection{Acknowledgments}
	The author thanks A. Bufetov for his question and
		W.\ A.\ Veech for his answer.
	He also thanks these two along with S.\ Ferenczi as well as these two for their enduring encouragement.
	The author is thankful every day for Laine, Charlie and Amy.

\section{Definitions}\label{SEC_DEF}

Let $\ZZ = \{0, \pm 1,\pm 2,\dots\}$ denote the set of integers,
	$\NN = \{1,2,3,\dots\}$ denote the positive integers
	and $\NN_0 = \NN \cup \{0\}$ denote the set of non-negative integers.
For integers $a,b\in \ZZ$ we let $\INTi{a,b}$ denote the interval of integers $x\in \ZZ$ satisfying
	$a\leq x \leq b$
	and $\INTio{a,b}$ denote the integers $y\in \ZZ$ satisfying $a\leq y <b$.
As special cases: if $a = b$ then $\INTi{a,b}=\INTio{a,b+1}$ is the set containing the single element $a=b$,
	if $b<a$ then $\INTi{a,b} = \INTio{a,b+1}$ is the empty set $\emptyset$
	and $\INTio{a,\infty}$ will denote the set of all integers $x\in \ZZ$
	satisfying $x\geq a$.
For a set $\AAA$ we let $|\AAA| \in \NN_0\cup \{\infty\}$ denote its cardinality.

\subsection{Rauzy Induction on Pairs}\label{SSEC_RAUZY_DEF}

In this section we discuss Rauzy induction using
	the prevalent notation for interval exchanges as used in
	\cite{cViana}.
We refer the reader to that text for a more complete discussion.

\begin{defi}
	Let $\AAA$ be a finite alphabet of two or more symbols.
	A \emph{pair over $\AAA$} is a tuple $(p_0,p_1)$ of
		bijections $p_t:\AAA \to \INTi{1,|\AAA|}$.
	A pair $(p_0,p_1)$ over $\AAA$ is \emph{irreducible}
		if
		$$
			p_0^{-1}(\INTi{1,k}) \neq p_1^{-1}(\INTi{1,k})
				\mbox{ for all }
			k\in \INTi{1,|\AAA|-1},
		$$
	meaning in other words that the sets of the first $k$ symbols
		according to each ordering rule $p_0$ and $p_1$ cannot coincide
		unless $k = |\AAA|$.
	The set of all irreducible pairs over $\AAA$ will be denoted by $\PAIRS{\AAA}$.
\end{defi}

If $(p_0,p_1)\in \PAIRS{\AAA}$ and $(p_0',p_1')$ is the
	resulting pair after applying Rauzy induction of type $t\in \{0,1\}$
	then
\begin{equation}\label{EQ_RIND_PAIRS}
	p'_t = p_t
	\mbox{ and }
	p'_{1-t}(b) = \begin{cases}
		p_{1-t}(b), &  \mbox{if }p_{1-t}(b) \in\INTi{1, p_{1-t}(w)},\\
		p_{1-t}(b)+1, &  \mbox{if }p_{1-t}(b)\in \INTi{ p_{1-t}(w)+1,|\AAA|-1},\\
		p_{1-t}(w)+1, &  \mbox{if }b = \ell,\\
	\end{cases}
\end{equation}
where $\ell = p_{1-t}^{-1}(|\AAA|)$ is the loser and $w = p_t^{-1}(|\AAA|)$
is the winner.
Using this notation, the $\AAA\times\AAA$ matrix $\Theta$ associated to this move of Rauzy induction
is
\begin{equation}\label{EQ_THETA_PAIRS}
	\Theta(a,b) = \begin{cases}
				1, & \mbox{if }a=b,\\
				1, & \mbox{if }a=w\mbox{ and } b = \ell,\\
				0, & \mbox{otherwise.}
			\end{cases}
\end{equation}

\begin{rema}\label{REM_EITHER_TYPE}
	As this definition indicates, if $\Theta$ describes a move
	of Rauzy induction then we can determine the winner $w$ and loser $\ell$
	of the move as they label the row and column respectively
	of the only off-diagonal $1$ entry.
	However, the type of the move cannot be determined
		from this matrix;
	 if $\Theta$ is the matrix associated to the move
		from $(p_0,p_1)$ to $(p_0',p_1')$
		and is of type $t$, then it is also the matrix associated to the
		move from $(p_1,p_0)$ to $(p_1',p_0')$ which is of type $1-t$.
\end{rema}

	It is a useful exercise to show that fixing two of the following:
		\begin{enumerate}
			\item the initial pair $(p_0,p_1)\in \PAIRS{\AAA}$,
			\item the resulting pair $(p_0',p_1')\in \PAIRS{\AAA}$ or
			\item the winner/loser tuple $(w,\ell)$,
		\end{enumerate}
	will uniquely determine the third.
	Furthermore, we may replace (3) above with knowing the type.

\begin{defi}\label{DEF_PAIR_PATH}
	A \emph{Rauzy path} or \emph{path of Rauzy induction}
		is an ordered (finite or infinite) sequence of moves by Rauzy induction.
	The length $N\in \NN\cup\{\infty\}$ is the number of moves in the path
		and the pairs visited by the path are $(p_0^{(j)},p_1^{(j)})$, $j\in \INTio{1,N+2}$
			(where $N + 2 = \infty$ if $N = \infty$),
	so that for each $j$ the pair $(p_0^{(j+1)},p_1^{(j+1)})$
		is the result of applying the $j^{th}$ inductive move to
		the pair $(p_0^{(j)},p_1^{(j)})$.
	We say that the path \emph{starts at} $(p_0^{(1)},p_1^{(1)})$
	and refer to this as the \emph{initial pair}.
\end{defi}

By the observation preceding this definition, if we know any pair $(p_0^{(j)},p_1^{(j)})$
	along the path and we know the types or winner/loser tuples of each induction move
	then we actually know each pair along the path.
However, this paper is concerned with determining any/all of the pairs given only the
	winner/loser tuples as this is the only information described by the
	sequence of matrices $\Theta_j$ associated to each moves for $j\in \INTi{1,N}$.
The observation in Remark \ref{REM_EITHER_TYPE} tells us by induction that we can always
	switch the rows in all pairs along a path without altering
	the winner and loser of each individual move.
This obstruction to determining a unique initial pair (which is considered proven by the
	dsicussions above) is stated here for future reference.

\begin{prop}\label{PROP_PAIRS_BOTH_TYPES}
	If $\Theta_1,\dots,\Theta_N$, $N\in \NN\cup\{\infty\}$, describes
		a path of Rauzy induction over $\AAA$ starting at $(p_0,p_1)\in \PAIRS{\AAA}$
	then this path may also start at $(p_1,p_0)$.
\end{prop}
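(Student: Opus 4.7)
The plan is to apply Remark \ref{REM_EITHER_TYPE} uniformly to each individual move of the path, swapping the roles of the two rows at every step. Concretely, given the original pairs $(p_0^{(j)},p_1^{(j)})$ for $j\in\INTio{1,N+2}$ visited by the path, I would define the candidate swapped sequence
\[
	(q_0^{(j)},q_1^{(j)}) := (p_1^{(j)},p_0^{(j)}),
	\quad j\in\INTio{1,N+2},
\]
and show that this sequence is itself a valid path of Rauzy induction with matrices $\Theta_1,\dots,\Theta_N$, so that in particular it starts at $(p_1,p_0)$.

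First I would observe that each $(q_0^{(j)},q_1^{(j)})$ lies in $\PAIRS{\AAA}$: the irreducibility condition $p_0^{-1}(\INTi{1,k})\neq p_1^{-1}(\INTi{1,k})$ is symmetric under interchanging $p_0$ and $p_1$, so irreducibility of $(p_0^{(j)},p_1^{(j)})$ immediately gives irreducibility of $(q_0^{(j)},q_1^{(j)})$.

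Next, for each $j\in\INTi{1,N}$, let $t_j\in\{0,1\}$ be the type of the $j^{th}$ move in the original path, sending $(p_0^{(j)},p_1^{(j)})$ to $(p_0^{(j+1)},p_1^{(j+1)})$ with matrix $\Theta_j$. By Remark \ref{REM_EITHER_TYPE}, the same matrix $\Theta_j$ describes a move of type $1-t_j$ sending $(p_1^{(j)},p_0^{(j)})=(q_0^{(j)},q_1^{(j)})$ to $(p_1^{(j+1)},p_0^{(j+1)})=(q_0^{(j+1)},q_1^{(j+1)})$. Applying this observation for every $j\in\INTi{1,N}$ shows that the sequence $(q_0^{(j)},q_1^{(j)})$ is precisely the sequence of pairs visited by a path whose $j^{th}$ move has matrix $\Theta_j$, finishing the proof. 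If $N=\infty$ the identical argument works, since the swap is done independently at each index.

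There is essentially no obstacle here: the result is a direct corollary of Remark \ref{REM_EITHER_TYPE} applied move-by-move. The only point worth verifying carefully is that the row-swap operation commutes with taking Rauzy induction of either type, which is exactly the content of the remark, and that irreducibility is preserved, which is immediate from the symmetry of Definition~2.1.
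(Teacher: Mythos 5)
Your proof is correct and matches the paper's own reasoning: the paper treats this proposition as already established by Remark \ref{REM_EITHER_TYPE} applied inductively along the path, which is exactly your move-by-move row swap. The only addition you make, checking that irreducibility is preserved under swapping rows, is a harmless and valid observation.
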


While introduced here, the motivation for the following terminology will be given in Remark \ref{REM_INVERSE_DEF}
	after we discuss the connection between pairs and permutations.

\begin{defi}\label{DEF_PAIR_INVERSE}
	If $(p_0,p_1)\in \PAIRS{\AAA}$ then its \emph{inverse} is $(p_1,p_0)\in \PAIRS{\AAA}$.
\end{defi}

So now Proposition \ref{PROP_PAIRS_BOTH_TYPES}
	tells us that we can at best uniquely determine the initial pair
	from a sequence of $\Theta_j$'s \emph{up to inverses}.
Before concluding this section, we will need to discuss a measure of length
	on paths that will be used in Theorem \ref{THM_ZPAIR}.
\begin{defi}\label{DEF_PAIR_COMPLETE}
	A Rauzy path on $\PAIRS{\AAA}$ is \emph{complete} if each $a\in \AAA$
		wins at least once.
	For $C\in \NN$, a path is \emph{$C$-complete}
		if it is the concatenation of $C$ Rauzy paths
		that are each complete.
\end{defi}

Recall from the introduction that an interval exchange satisfies i.d.o.c.
	if and only if Rauzy induction may be applied to it infinitely many times.
We include a formal statement here which may be verified in \cite[Section 4]{cYoccoz} for example.

\begin{prop}
	For fixed infinite Rauzy path on $\PAIRS{\AAA}$ the following are equivalent:
	\begin{enumerate}
		\item The path was obtained by induction on an i.d.o.c. interval exchange transformation, and
		\item For any fixed $j\in \NN$ there exists $j'\in \INTio{j+1,\infty}$
		so that the finite path consisting of moves $j$ to $j'$ is complete.
	\end{enumerate}
\end{prop}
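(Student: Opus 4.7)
The plan is to derive both implications from the standard dictionary between Rauzy induction and Keane's i.d.o.c.\ condition. Let $\Theta_1,\Theta_2,\dots$ denote the matrices along the path and set $B_N = \Theta_1\Theta_2\cdots \Theta_N$.

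For $(1)\Rightarrow(2)$ I would invoke Keane's theorem, which states that an IET is i.d.o.c.\ if and only if it admits infinitely many Rauzy inductions and, along that induction, every letter $a\in\AAA$ is the winner at infinitely many steps. Granted this, for any fixed $j$ we may choose $j' > j$ large enough that every $a\in\AAA$ has won at least once during moves $j,j+1,\dots,j'$; the corresponding window is then complete in the sense of Definition \ref{DEF_PAIR_COMPLETE}. If I had to reprove Keane's implication, I would argue that if some letter $a$ stopped winning after step $j_0$, then the length $\lambda_a^{(j)}$ would be constant for $j\geq j_0$; but i.d.o.c.\ forces the length of the induced interval to tend to $0$, contradicting the persistence of a fixed positive coordinate.

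For $(2)\Rightarrow(1)$ I would construct an IET realizing the path. The length vectors $\lambda$ realizing the first $N$ prescribed steps of induction, starting from the pair $(p_0^{(1)},p_1^{(1)})$, are exactly the interior points of the cone $C_N = B_N(\mathbb{R}_{\geq 0}^{\AAA})$. The $C_N$ are nested, and hypothesis (2) lets us chop the path into infinitely many disjoint complete blocks. A positivity argument on the corresponding block products, combined with a Birkhoff/Hilbert projective contraction, forces $\bigcap_N C_N$ to collapse to a single ray. Any nonzero $\lambda$ on that ray yields an IET whose Rauzy expansion is the prescribed infinite path; since the induction then proceeds forever with every letter winning infinitely often, Keane's theorem gives i.d.o.c.

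The main obstacle is the positivity/contraction step in the second direction. Each $\Theta_j$ differs from the identity by only a single off-diagonal $1$, so one complete window is not enough to produce a strictly positive matrix product. I would expect to track how the distinguished off-diagonal entries propagate through successive complete windows: once a particular column of the running product has been populated, subsequent windows use that column to spread positivity to new columns, and after on the order of $|\AAA|$ complete windows the product becomes strictly positive. This positivity is what then drives the projective contraction forcing the nested cones to shrink to a ray.
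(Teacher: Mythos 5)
The paper itself does not prove this proposition; the author presents it as standard background and refers the reader to \cite[Section 4]{cYoccoz}, so there is no internal argument to compare against. Your sketch is in spirit the standard one, and the $(1)\Rightarrow(2)$ direction is correct: the i.d.o.c.\ condition forces the induction to continue forever, and a letter that stopped winning would have eventually constant length while the total induced length tends to $0$, which is absurd.

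The $(2)\Rightarrow(1)$ direction, however, contains a genuine misstatement. You claim that the Birkhoff/Hilbert projective contraction forces $\bigcap_N C_N$ to collapse to a single ray. This is false: hypothesis (2) holds for Keane's non-uniquely-ergodic i.d.o.c.\ IETs, whose parameter cone $\bigcap_N C_N$ has dimension at least two. Uniform projective contraction would require the block products to have uniformly bounded projective diameters, and completeness alone does not supply that. Fortunately the collapse is also unnecessary for the proposition, which only asks for the \emph{existence} of a realizing IET. What you actually need is: (a) the projectivized cones $\Delta_N$ form a nested sequence of nonempty compacta, so $\bigcap_N \Delta_N \neq \emptyset$; (b) there is some $K$ with $B_K$ strictly positive, which places $\Delta_K$, and hence the whole intersection, compactly inside the open simplex; and (c) any $\lambda$ in the interior of every $C_N$ has exactly the prescribed expansion, since a tie at step $j$ would produce a zero coordinate at step $j+1$, contradicting (b). Your closing paragraph about propagating off-diagonal entries is the right germ of an argument for (b), but it is only a gesture; note also that a single complete window does not in general yield a strictly positive product, so the positivity in (b) genuinely needs several complete windows (on the order of $|\AAA|$), and that bound needs to be established rather than asserted.
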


We then have the following.

\begin{coro}
	If an infinite Ruazy path was obtained by induction on an i.d.o.c. interval exchange transformation,
		then for any $C\in \NN$ there exists $N\in \NN$ such that the finite subpath taken from the first $N$ steps
			is $C$-complete.
\end{coro}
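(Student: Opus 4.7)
The plan is to prove this by induction on $C$, using the previous proposition as the iterative engine. The definition of $C$-completeness (Definition \ref{DEF_PAIR_COMPLETE}) decomposes a $C$-complete path into a concatenation of $C$ complete paths, so all we need is to produce a chain of cutoff indices along the infinite path at which the successive sub-segments are each complete.

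More concretely, I would argue as follows. For the base case $C=1$, apply the proposition to $j=1$: since the infinite path comes from an i.d.o.c.\ interval exchange, there exists some $j_1 \in \INTio{2,\infty}$ such that moves $1$ through $j_1$ form a complete Rauzy path, and we set $N_1 = j_1$. For the inductive step, assume we have produced an index $N_C$ such that the first $N_C$ moves form a $C$-complete path, written as a concatenation of $C$ complete segments. Now apply the proposition again, this time to $j = N_C + 1$: there exists $j' \in \INTio{N_C+2,\infty}$ such that moves $N_C+1$ through $j'$ form a complete path. Concatenating this new complete segment onto the previous $C$ complete segments yields a decomposition of the first $j'$ moves into $C+1$ complete segments, which is by definition $(C+1)$-complete; set $N_{C+1} = j'$. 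Iterating this $C$ times starting from the base case produces the required $N = N_C$.

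There is no real obstacle here: the content is entirely in the preceding proposition, and the corollary is a routine iteration. The only subtlety to be careful about is bookkeeping of indices so that the complete sub-paths are disjoint and really do concatenate to the head of the path (i.e.\ that each successive application of the proposition starts exactly where the previous sub-path ended, leaving no gap and no overlap). Because the proposition allows us to start at any $j \in \NN$, this bookkeeping goes through without issue.
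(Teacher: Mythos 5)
Your proof is correct and is exactly the straightforward iteration of the preceding proposition that the paper has in mind (the paper states the corollary without an explicit proof, treating it as an immediate consequence). The only point worth stressing, which you already handle, is the index bookkeeping: applying the proposition at $j = N_C + 1$ gives a complete segment starting precisely where the previous one ended, so the $C+1$ segments concatenate without gap or overlap to cover moves $1$ through $N_{C+1} = j'$.
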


\begin{rema}
	As we will show in Theorem \ref{THM_ZPAIR}, we will be able to produce by Algorithm \ref{ALGO_ZORICH} the initial pair (up to inverses) of a
	finite Rauzy path provided that it is $C$-complete for $C$ large enough.
The size of our $C$ depends on the size $|\AAA|$, and checking for $C$-completeness may be done
	using only the matrices given by induction.
	So we may take an infinite path, select $N$ so that the first $N$ steps form such a $C$-complete path.
\end{rema}

\subsection{Rauzy Induction on Permutations}\label{SSEC_RAUZY_DEF2}

In this section we discuss Rauzy induction using
	another prevalent convention for interval exchanges,
	which significantly predates
	the notation from the previous section.
We translate some of the commonly used symbols in this paper to match
	with the previous section.

\begin{defi}\label{DEF_PERM_IRR}
For $n\in \NN$, 
	a \emph{permutation} on $n$ symbols is a bijection $\pi$ from the set $\INTi{1,n}$ to itself
	and the set of all permutations on $n$ symbols is $\PERM{n}$.
A permutation $\pi\in \PERM{n}$ is \emph{irreducible}
	if
		$$
			\pi^{-1}(\INTi{1,k}) \neq \INTi{1,k} \mbox{ for all } k\in \INTi{1,n-1},
		$$
	meaning the first $k$ symbols does not remain preserved as a set
		by $\pi$ unless $k = n$.
The set of all irreducible permutations on $n$ symbols
	is $\IRR{n}$.
\end{defi}

If $\pi\in \IRR{n}$ then the result of applying Rauzy induction of type $0$
	yields $\pi'$ where
		\begin{equation}
			\pi'(i) = \begin{cases}
				\pi(i), & \mbox{if }\pi(i)\in \INTi{1,\pi(n)},\\
				\pi(i) + 1, & \mbox{if } \pi(i) \in \INTi{\pi(n)+1,n-1},\\
				\pi(n) + 1, & \mbox{if } \pi(i) = n,
				\end{cases}
		\end{equation}
and the associated matrix $A$ is given by
	\begin{equation}\label{EQ_A0_PERMS}
		A(i,j) = \begin{cases}
				1, & \mbox{if }i=j,\\
				1, & \mbox{if }i =n \mbox{ and } j = \pi^{-1}(n),\\
				0, & \mbox{otherwise}.
			\end{cases}
	\end{equation}
The result of applying Rauzy induction of type $1$
	yields $\pi''$ where
		\begin{equation}
			\pi''(i) = \begin{cases}
				\pi(i), & \mbox{if } i\in \INTi{1,\pi^{-1}(n)},\\
				\pi(n), & \mbox{if } i = \pi^{-1}(n)+1,\\
				\pi(i-1), & \mbox{if } i\in \INTi{\pi^{-1}(n)+2,n},
				\end{cases}
		\end{equation}
and the associated matrix $A$ is given by
	\begin{equation}\label{EQ_A1_PERMS}
		A(i,j) = \begin{cases}
				1, & \mbox{if } i=j\mbox{ and } i\leq \pi^{-1}(n),\\
				1, & \mbox{if } j = i+1 \mbox{ and } j > \pi^{-1}(n),\\
				1, & \mbox{if } i=n \mbox{ and } j = \pi^{-1}(n)+1,\\
				0, & \mbox{otherwise}.
				\end{cases}
	\end{equation}

Similar to the discussion before Definition \ref{DEF_PAIR_PATH}
	concerning pairs, if we know two of the following concerning a move of induction:
	the initial permutation $\pi$, the resulting permutation $\pi'$
		or the type $t$,
	then we know all three.
Furthermore, by irreducibility $\pi^{-1}(n) \neq n$ and $\pi(n) \neq n$ and so
	we may determine from a given matrix $A$ the type $t$ and the value $\pi^{-1}(n)$.
	
\begin{defi}
	A \emph{Rauzy path} or \emph{path of Rauzy induction} on $\IRR{n}$
		is an ordered (finite or infinite) sequence of moves of Rauzy induction.
	The length $N\in \NN\cup\{\infty\}$ is the 
		number of moves and the permutations visited by the path
		is a sequence $\pi^{(j)}$, $j\in \INTio{1,N+2}$,
		in $\IRR{n}$ so that $\pi^{(j+1)}$ is the
		result of applying the $j^{th}$ inductive move
			on $\pi^{(j)}$.
	We say that the path \emph{starts at } $\pi^{(1)}$ and refer to
		this as the \emph{initial permutation}.
\end{defi}

For our question, we know the matrices $A_j$, $j\in \INTi{1,N}$,
	defined by a path of Rauzy induction on $\IRR{n}$
	and want to determine the permutations visited by the path.
Over $\IRR{n}$ we do not have an analogue to Proposition \ref{PROP_PAIRS_BOTH_TYPES}
	as the type may be determined by the matrices.
Therefore, we do not have such an obstruction to determining the initial
	permutation uniquely (compare Theorem \ref{THM_ZPAIR} with Corollary \ref{COR_ZPERM}).
	
As with Definition \ref{DEF_PAIR_INVERSE}, we give the following with motivation explained in the next section by Remark \ref{REM_COMPLETE_DEF}.
\begin{defi}\label{DEF_IRR_COMPLETE}
	A Rauzy path $A_1,\dots, A_N$ on $\IRR{n}$ is \emph{complete}
		if the product matrix $\prod_{j=1}^N A_j$ has in each row at least two nonzero entries.
	For $C\in \NN$, a path is $C$-complete if it is the concatenation of $C$
		Rauzy paths that are each complete.
\end{defi}

\subsection{Translating between Pairs and Permutations}\label{SSEC_PERM+PAIRS}

In this section, we make precise the relation between matrices for Rauzy induction on $\PAIRS{\AAA}$
	and those on $\IRR{n}$.
The primary result is the statement in Lemma \ref{LEM_LIFTING_UP}, which tells us that
	a Rauzy path on $\IRR{n}$ may be lifted to a Rauzy path on $\PAIRS{\AAA}$
		even if we do not know the permutations associated to the path.
The specific contents of this section may be skipped on first reading if preferred
	as the specific notation is not as important to the main results that follow.

For finite alphabet $\AAA$ with $n = |\AAA|$,
	there is a natural map $\Pi : \PAIRS{\AAA} \to \IRR{n}$ given by
	\begin{equation}
		\Pi(p_0,p_1) := p_1 \circ p_0^{-1}.
	\end{equation}

\begin{rema}\label{REM_INVERSE_DEF}
	If $\pi = \Pi(p_0,p_1)$, then $\Pi(p_1,p_0)$ is equal to $\pi^{-1}$.
	This justifies the definition of inverese for pairs in Definition \ref{DEF_PAIR_INVERSE}.
\end{rema}

Given $(q_0,q_1)\in \PAIRS{\BBB}$ over alphabet $\BBB$, $|\BBB| = n$,
	we have that $\Pi(q_0,q_1) = \Pi(p_0,p_1)$
	if and only if there exists a bijection $\tau :\BBB \to \AAA$
	such that
	$q_t = p_t \circ \tau$ for each $t\in \{0,1\}$.
	We use the same bijection name to define a map from $\PAIRS{\AAA}$ to $\PAIRS{\BBB}$
		by
	$$
		\tau(p_0,p_1) := (p_0\circ \tau, p_1 \circ \tau).
	$$
There is also a natural lift $\widehat{\Pi}: \IRR{n} \to \PAIRS{\INTi{1,n}}$
	given by
	\begin{equation}
		\widehat{\Pi}(\pi) := (\ID{n},\pi),
	\end{equation}
	where $\ID{n}\in \PERM{n}$ is the identity.
	
Fix $(p_0,p_1)\in \PAIRS{\AAA}$.
If we let $\RRR_t$ denote the action of Rauzy induction of type $t$ (on pairs and permutations),
	we want to explicitly describe the relationship between
	$(p_0',p_1'):= \RRR_t(p_0,p_1)$ and $\pi' := \RRR_t(\pi)$ for
		$\pi := \Pi(p_0,p_1) \in \IRR{n}$.
Let
$$
	(q_0,q_1) := \RRR_t \big(\widehat{\Pi}(\pi)\big) = \RRR_t(\ID{n},\pi).
$$
For these choices we then define bijections $\tau,\tau':\INTi{1,n} \to \AAA$ and $\sigma_t:\INTi{1,n} \to \INTi{1,n}$ by
$$
		\tau(p_0,p_1) = \widehat{\Pi}(\pi),~\tau'(p_0',p_1') = \widehat{\Pi}(\pi')\mbox{ and }
		\sigma_t(q_0,q_1) = \widehat{\Pi}(\pi').
$$
We may directly verify that $\tau = p_0^{-1}$ and $\tau' = p_0'^{-1}$,
	while $\sigma_t$ will depend on the type $t$ and possibly the value $k = \pi^{-1}(n)$.
The relationships between these pairs and bijections are provided by diagram in Figure \ref{FIG_COMMUTE}.
We are equivalently defining $\sigma_t$ as the bijection (acting on pairs) such that
	$$
		\widehat{\Pi} \circ \RRR_t = \sigma_t \circ \RRR_t\circ \widehat{\Pi}.
	$$
Because induction commutes with any renaming bijection on pairs, this then implies
as bijections on $\AAA$ we have
	$
		\tau' = \tau \circ \sigma_t
	$
,
meaning
	$$
		\tau'(j) = \tau(\sigma_t(j)) \mbox{ for all } j\in \INTi{1,n}.
	$$
(We note that we have by definition $\sigma_t\big(\tau (p_0,p_1)\big)$
	equals $\sigma_t(p_0\circ\tau,p_1\circ \tau)$
	and is therefore $(p_0\circ \tau \circ \sigma_t, p_1 \circ \tau \circ \sigma_t) = (\tau \circ \sigma_t) \big(p_0,p_1\big)$.)

If $t = 0$, then $q_0 = \ID{n}$ and $q_1 = \pi'$ and so $\sigma_0 = \ID{n}$.
If $t = 1$, then $q_1 = \pi$ but $q_0 \neq \ID{n}$.
We may verify that $\sigma_1$ is given by
	$$
		\sigma_1(j) = \begin{cases}
			j, & j\in \INTi{1,k},\\
			n, & j = k+1,\\
			j-1, & j \in \INTi{k+2,n},
			\end{cases}
	$$
where $k = \pi^{-1}(n)$.

If $\Theta$ is the matrix from \eqref{EQ_THETA_PAIRS} applied to $(p_0,p_1)$
 and $A$ is the matrix from \eqref{EQ_A0_PERMS} or \eqref{EQ_A1_PERMS}
 applied to $\pi$, then we have
 \begin{equation}
 	\Theta = \Psi_{\tau} A \Psi^*_{\tau\circ \sigma_t}
 \end{equation}
 where $\Psi_\tau$ is the $\INTi{1,n}\times\AAA$ matrix defined by
\begin{equation}\label{EQ_Psi_tau_def}
	\Psi_\tau(j,b) = \begin{cases}
			j, & \tau(i) = b,\\
			0, & \tau(b) \neq j,
		\end{cases}
\end{equation}
and likewise for $\Psi_{\tau \circ \sigma_t}$.

\begin{rema}
Before continuing, we make a few observations.
First, the adjoint $\Psi^*_{\tau\circ \sigma_t}$
	is the inverse of $\Psi_{\tau\circ \sigma_t}$, meaning
	$\Psi_{\tau\circ \sigma_t}\Psi^*_{\tau\circ \sigma_t}$ is the $\INTi{1,n}\times \INTi{1,n}$
	identity matrix and $\Psi^*_{\tau\circ \sigma_t}\Psi_{\tau\circ \sigma_t}$ is the $\AAA\times \AAA$
		identity matrix.
Second, if we act on $\pi'$ and $(p_0',p_1')$ by induction $\RRR_{t'}$,
	the new $\tau'$ we obtain would precisely be $\tau \circ \sigma_t$.
Finally, if we know the matrix $A$, we know the type $t$ and $k = \pi^{-1}(n)$
	if $t=1$ therefore knowing $A$ and $\tau$ uniquely determines $\Theta$
	(even without knowing $\pi$ or $\pi'$).
\end{rema}

\begin{figure}[t]
	\begin{center}
		\includegraphics[scale=1]{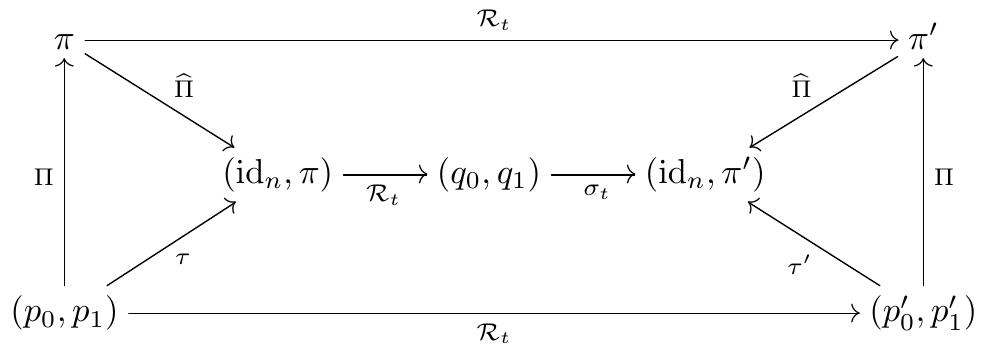}
	\end{center}
\caption{Diagram illustrating the relationship between the pairs $(p_0,p_1)$, $(q_0,q_1)$ and the permutation $\pi$.}\label{FIG_COMMUTE}
\end{figure}

The next result will be used to take our main result on $\PAIRS{\AAA}$, Theorem \ref{THM_ZPAIR},
	to arrive at our main result on $\IRR{n}$, Corollary \ref{COR_ZPERM}.
The proof follows by our work in this section and induction.

\begin{lemm}\label{LEM_LIFTING_UP}
	Let matrices $A_1,\dots, A_N$ be associated to a Rauzy path in $\IRR{n}$ and
		$\tau$ be any bijection from $\INTi{1,n}$ to an alphabet $\AAA$.
	If $t_i$ is the type of $A_i$ and $k_i$ satisfies $\pi^{(i)}(n) = k_i$ for $i$ such that $t_i = 1$,
		let $\Theta_1,\dots,\Theta_N$ be defined by
			$$
				\Theta_i = \Psi_{\tau_i} A_i \Psi^*_{\tau_{i+1}} \mbox{ for } i\in \INTi{1,N}
			$$
		where $\tau_1 = \tau$ and $\tau_{i+1} = \tau_i \circ \sigma_{t_i}$ for $i\in \INTi{1,N}$.
	Then if $(p_0,p_1)\in \PAIRS{\AAA}$ begins a Rauzy path described by $\Theta_1,\dots,\Theta_N$ then
		exactly one of the following must hold:
	\begin{enumerate}
		\item For each $i\in \INTi{1,N}$ the move described by $\Theta_i$ is type $t_i$ and $\Pi(p_0,p_1)\in \IRR{n}$
			starts a Rauzy path described by $A_1,\dots,A_N$.
		\item For each $i\in \INTi{1,N}$ the move described by $\Theta_i$ is type $1-t_i$ and $\Pi(p_1,p_0)\in \IRR{n}$
			starts a Rauzy path described by $A_1,\dots,A_N$.
	\end{enumerate}
	Conversely, if $\pi\in \IRR{n}$ starts a Rauzy path described by $A_1,\dots,A_N$ then
		both
			$$(\tau,\pi\circ \tau)\mbox{ and }(\pi\circ \tau,\tau)$$
		start Rauzy paths described by $\Theta_1,\dots,\Theta_N$,
		one with types that agree with the path starting at $\pi$ and the other with opposite types.
\end{lemm}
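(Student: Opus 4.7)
The plan is to proceed by induction on $N$, using as base case the one-step identity $\Theta = \Psi_\tau A \Psi^*_{\tau\circ\sigma_t}$ derived in this section together with Proposition~\ref{PROP_PAIRS_BOTH_TYPES}, which in tandem supply the ``match or flip'' dichotomy between cases (1) and (2).

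I will address the converse direction first, as it is the more direct. Given $\pi\in \IRR{n}$ beginning the $A$-path with types $t_i$, applying the one-step identity to the lifted pair $(\tau,\pi\circ\tau)$ shows that its type-$t_1$ Rauzy move has matrix $\Psi_\tau A_1 \Psi^*_{\tau\circ\sigma_{t_1}} = \Theta_1$ and produces a new pair whose canonical renaming is $\tau_2 = \tau\circ\sigma_{t_1}$, ready for the next step. Iterating inductively along $A_2,\dots,A_N$ yields the full $\Theta$-path starting at $(\tau,\pi\circ\tau)$ with types $t_i$; Proposition~\ref{PROP_PAIRS_BOTH_TYPES} then furnishes the inverse pair $(\pi\circ\tau,\tau)$ as a second valid start of the same $\Theta$-path, now with all types flipped to $1-t_i$.

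For the forward direction, suppose $(p_0,p_1)$ starts a $\Theta$-path with pairs $(p_0^{(i)},p_1^{(i)})$ and some (a priori unknown) types $t_i'$. A direct check shows that $\Pi$ commutes with Rauzy induction on pairs, since it is invariant under the renaming action, so $\pi^{(i)}:=\Pi(p_0^{(i)},p_1^{(i)})$ evolves by Rauzy moves on $\IRR{n}$ of matching type $t_i'$. Applying the one-step identity at each step then relates the matrix of the corresponding $\IRR{n}$-move on $\pi^{(i)}$ back to $A_i$ via the renaming bijection intrinsic to $(p_0^{(i)},p_1^{(i)})$. The key observation is that the lift is forced into one of exactly two configurations relative to $\tau$: either the canonical one (case 1, with types $t_i'=t_i$ and $\Pi(p_0,p_1)$ starting the $A$-path), or its inverse obtained via Proposition~\ref{PROP_PAIRS_BOTH_TYPES} (case 2, with types flipped and $\Pi(p_1,p_0)$ starting the $A$-path). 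The commutation $\tau_{i+1}=\tau_i\circ\sigma_{t_i}$ baked into the construction of the $\Theta_i$'s is what ensures the choice made at step one propagates consistently across all $N$ steps.

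The step I anticipate as the main obstacle is the base-case analysis: verifying that $\Theta_1 = \Psi_\tau A_1 \Psi^*_{\tau\circ\sigma_{t_1}}$ together with the winner/loser data (which, per Remark~\ref{REM_EITHER_TYPE}, is all that one can read directly from $\Theta_1$) genuinely forces the lift to be one of the two canonical options up to inversion, and not some intermediate configuration. Once this base-case bookkeeping is pinned down, the remainder of the argument is a mechanical iteration of the single-step identity combined with Proposition~\ref{PROP_PAIRS_BOTH_TYPES}.
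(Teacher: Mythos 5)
Your overall plan—iterate the one-step identity $\Theta = \Psi_\tau A \Psi^*_{\tau\circ\sigma_t}$ and combine with Proposition~\ref{PROP_PAIRS_BOTH_TYPES}—is exactly what the paper's (terse) proof sketch invokes, and your treatment of the converse direction is correct: the canonical lift of $\pi$ reproduces $\Theta_1$ via the identity, the update $\tau_2 = \tau\circ\sigma_{t_1}$ primes the next step, and Proposition~\ref{PROP_PAIRS_BOTH_TYPES} supplies the inverse pair with flipped types.

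The forward direction, however, contains a genuine gap, and it is exactly the one you flag as the ``main obstacle.'' Your key claim—that a pair $(p_0,p_1)$ starting the $\Theta$-path is ``forced into one of exactly two configurations relative to $\tau$''—does not follow from the one-step identity together with the data visible in the $\Theta_i$. The matrix $\Theta_1$ reveals only the winner $w_1$ and loser $\ell_1$; for the relevant type $t$ this imposes only $p_t^{-1}(n)=w_1$ and $p_{1-t}^{-1}(n)=\ell_1$, which is far weaker than forcing the intrinsic renaming bijection $p_t^{-1}$ to coincide with $\tau$. Hence $\Pi(p_0,p_1)$ need not coincide with the $\pi^{(1)}$ that generated the $A$-path. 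Concretely, take $N=1$, $n=3$, $\tau=\ID_3$, $A_1$ of type $0$ with off-diagonal entry at $(3,2)$, and let $p_0 = (1\mapsto2,\,2\mapsto1,\,3\mapsto3)$, $p_1 = (1\mapsto1,\,2\mapsto3,\,3\mapsto2)$: this pair is irreducible, its type-$0$ move has winner $3$ and loser $2$ (hence matrix $\Theta_1$), yet $\Pi(p_0,p_1)^{-1}(3)=1\neq 2$, so $\Pi(p_0,p_1)$ does not start a path described by $A_1$, and neither alternative (1) nor (2) applies. Closing the gap requires an actual inductive tracking of the renaming bijection $\tau_i' = (p^{(i)}_{t_i})^{-1}$ intrinsic to the given pair, showing step by step that the constraints from the full winner/loser sequence propagate the correspondence $\Psi^*_{\tau_i'}\Psi_{\tau_i}=\ID$ (or some equivalent), or else restricting the forward direction to the setting where the initial pair is already known to be the canonical lift up to inverse—which is in fact the only way the lemma is used in the proof of Corollary~\ref{COR_ZPERM}. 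As written, the forward paragraph restates the conclusion rather than establishing it.
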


\begin{rema}\label{REM_COMPLETE_DEF}
	The descriptions of complete and $C$-complete from Definitions \ref{DEF_PAIR_COMPLETE} and \ref{DEF_IRR_COMPLETE}
		are equivalent in the following sense:
		a path $A_1,\dots,A_N$ over $\IRR{n}$ is $C$-complete if and only if any lift to $\Theta_1,\dots,\Theta_N$
			over an alphabet $\AAA$ is $C$-complete.
	This is because the transformations $\Psi_{\tau_i}$ and $\Psi^*_{\tau_i}$ only
		change the indexing of rows and columns but keeps entries in the same row together (and likewise for columns).
\end{rema}

	\subsection{Zorich Induction on Pairs}\label{SSEC_ZORICH_DEF}
			
			Zorich induction on $\PAIRS{\AAA}$ is the acceleration of Rauzy induction that applies all consecutive steps of Rauzy induction in a path of the same
				type as one action, and therefore one matrix which we denote by $\widetilde{\Theta}$.
			We obtain this matrix by taking the product of the successive $\Theta$ matrices from Rauzy induction.
			As we have discussed, the actual type of a matrix $\Theta$ cannot in itself be determined.
			However, two consecutive matrices in a Rauzy path have the same type if and only if the ``winner row'',
				meaning the only row with a non-zero off-diagonal entry,
				of each matrix are the same index $w\in \AAA$.
			The losers will likely be different.
			
			\begin{exam}
				If we have initial pair $(p_0,p_1)$ given by
					\begin{equation}
						(p_0,p_1) = \left(\begin{matrix}
							1&2&3&4&5\\5&4&3&2&1
							\end{matrix}\right)
					\end{equation}
				and the Rauzy path starts with six steps of type $1$ (and the seventh is type $0$), then
				the resulting matrix for this one step of Zorich induction is
					\begin{equation}\label{EQ_ZORICH_EXAM}
						\widetilde{\Theta} = \left(\begin{matrix}
												1 & 1 & 1 & 2 & 2\\
												0 & 1 & 0 & 0 & 0\\
												0 & 0 & 1 & 0 & 0\\
												0 & 0 & 0 & 1 & 0\\
												0 & 0 & 0 & 0 & 1
												\end{matrix}\right).
					\end{equation}
				Note that for each letter $j\in \INTi{2,5}$, the entry $\widetilde{\Theta}(1,j)$
					gives us the number of steps in which $j$ lost to $1$.
				Furthermore, each such letter $j$ lost to $1$ once before any letter lost to $1$ a second time.
			\end{exam}
			
			In this paper, we will somewhat relax this acceleration to allow for any products of consecutive moves of the same type
				as a Zorich move.
			Specifically, we will allow for two consecutive Zorich matrices of the same
				type to exist in a path rather than forcing the type to change between each step.
				
			\begin{defi}
				A sequence of $\AAA\times \AAA$ matrices $\big(\widetilde{\Theta}_n\big)_{n\in \INTi{1,N}}$ of length $N\in \NN\cup\{\infty\}$ is a
					\emph{Zorich path} on $\PAIRS{\AAA}$ if there exists an \emph{underlying Rauzy path} $\big(\Theta_m\big)_{m\in \INTi{1,N'}}$ of length $N' \geq N$
						($N'<\infty$ if $N<\infty$)
					such that for an increasing sequence of numbers $1=m_1<m_2<\dots < N'+1$ (with $m_{N+1} = N' +1$ if $N  \leq N' <\infty$) we have that
					for each $n\in \INTio{1,N+1}$
					all matrices $\Theta_i$ for $i\in \INTio{m_{n},m_{n+1}}$ have the same type and
					$\widetilde{\Theta}_n = \Theta_{m_n} \Theta_{m_{n}+1} \cdots \Theta_{m_{n+1}-1}$.
			\end{defi}
			
			\begin{exam}
				Continuing along our previous example,
					we could also express the six Rauzy moves
					as two consecutive Zorich matrices,
					\begin{equation}\label{EQ_ZORICH_EXAM2}
						\widetilde{\Theta}_1 = \left(\begin{matrix}
												1 & 1 & 1 & 1 & 1\\
												0 & 1 & 0 & 0 & 0\\
												0 & 0 & 1 & 0 & 0\\
												0 & 0 & 0 & 1 & 0\\
												0 & 0 & 0 & 0 & 1
												\end{matrix}\right),
											~
						\widetilde{\Theta}_2 = \left(\begin{matrix}
												1 & 0 & 0 & 1 & 1\\
												0 & 1 & 0 & 0 & 0\\
												0 & 0 & 1 & 0 & 0\\
												0 & 0 & 0 & 1 & 0\\
												0 & 0 & 0 & 0 & 1
												\end{matrix}\right),
					\end{equation}
				by taking $\widetilde{\Theta}_1$ to be the product of the first four Rauzy matrices and
					$\widetilde{\Theta}_2$ to be the product of the last two.
				(There are certainly many more choices.)
			\end{exam}

			It follows that knowing only a Zorich path can result in a loss of information.
			For example, if we only know $\widetilde{\Theta}_1$ from \eqref{EQ_ZORICH_EXAM2}, we
				know that: four moves of Rauzy induction occurred, the winner is $1$ and the losers were
				$2,3,4,5$ each one time.
			However, just from this matrix we are not able to determine the order in which the losses occurred.
			We know a little more from $\widetilde{\Theta}$ in \eqref{EQ_ZORICH_EXAM}.
			In this case, $2,3,4,5$ were the losers, but after each letter lost for the first time
				 the letters $4,5$ then lost a second time.
			We therefore know that $2$ and $3$ must have occurred before $4$ and $5$ in the row $1-t$ of the pair before
				this move,
				where $t$ is the assumed type.
			
			\begin{defi}
				A Zorich path is \emph{$C$-complete} if there exists an underlying $C$-complete Rauzy path.
			\end{defi}
			
			\begin{rema}
			While it is possible that more than one underlying path exists for a Zorich path,
				it is left as an exercise to verify $C$-completeness is independent of the Rauzy path selected.
			Furthermore, we may verify the $C$-completeness of a Zorich path without knowing an underlying Rauzy path.
			We can do this because we may recover the \emph{winners} in order and with multiplicity of any underlying Ruazy path from the
				$\widetilde{\Theta}$ matrices.
			\end{rema}
			
	\subsection{Zorich Induction on Permutations}\label{SSEC_ZORICH_DEF2}
	
		We now address our modification of Zorich acceleration on $\IRR{n}$.
		The definitions will be analogous to those in the previous section.
		We then conclude that Lemma \ref{LEM_LIFTING_UP} still holds for Zorich paths.
		
		\begin{defi}
				A sequence of $n\times n$ matrices $\big(\widetilde{A}_b\big)_{b\in \INTi{1,N}}$ of length $N\in \NN\cup\{\infty\}$ is a
					\emph{Zorich path} on $\IRR{n}$ if there exists an \emph{underlying Rauzy path} $\big(A_m\big)_{m\in \INTi{1,N'}}$ of length $N' \geq N$
						($N'<\infty$ if $N<\infty$)
					such that for an increasing sequence of numbers $1=m_1<m_2<\dots < N'+1$ (with $m_{N+1} = N' +1$ if $N  \leq N' <\infty$) we have that
					for each $b\in \INTio{1,N+1}$
					all matrices $A_i$ for $i\in \INTio{m_{b},m_{b+1}}$ have the same type and
					$\widetilde{A}_b = A_{m_b} A_{m_{b}+1} \cdots A_{m_{b+1}-1}$.
				A Zorich path is \emph{$C$-complete} if there exists an underlying $C$-complete Rauzy path.
		\end{defi}
		
		We now briefly note the structure of a matrix $\widetilde{A}$ from a Zorich move.
		If the underlying Rauzy matrices were from \eqref{EQ_A0_PERMS}, then
			$\widetilde{A}$ has a similar structure as matrix $\widetilde{\Theta}$ on $\PAIRS{\AAA}$,
			all non-zero off diagonal entries are in row $n$.
		If the underlying Rauzy matrices were from \eqref{EQ_A1_PERMS}, then
			these underlying matrices are all identical as $k=\pi^{-1}(n)$ is
				not changed by a move of type $1$.
		In this case $\widetilde{A}$ is a power of one underlying matrix.
		In either case, we know the type of $\widetilde{A}$ and if it is type $1$
			we also know $\pi^{-1}(n)$.
		
		If for fixed $\AAA$ such that $|\AAA| = n$ and bijection $\tau: \INTi{1,n} \to \AAA$,
			we recall the lifting matrix
			$\Psi_\tau$ from \eqref{EQ_Psi_tau_def} that allowed to relate a Rauzy path (denoted by $A_i$'s) on $\IRR{n}$
			to one on $\PAIRS{\AAA}$ (denoted by $\Theta_i$'s).
		In the discussion prior to Lemma \ref{LEM_LIFTING_UP}, we also defined bijections $\sigma_t$
			so that for each Rauzy matrix $A_i$ of type $t_i$
			we had that $\Theta_i = \Psi_{\tau_i} A_i \Psi^*_{\tau_{i+1}}$
			for each $i$, where $\tau_{i+1} = \tau_i \circ \sigma_{\tau_i}$ for $i\geq 1$ and $\tau = \tau_1$.
		Now assume that we have Zorich move $\widetilde{A} = A_{i_1} \cdots A_{i_2}$.
			Because the moves from $i$ to $i+1$ for $i\in \INTio{i_1,i_2}$ are all the same type, it follows that
				$$
				\Psi_{\tau_i} \widetilde{A} \Psi^*_{\tau_i \circ \sigma_t^{i_2-i_1+1}} = \widetilde{\Theta}
				$$
		where $\widetilde{\Theta} = \Theta_{i_1} \cdots \Theta_{i_2}$ is the corresponding Zorich move $\PAIRS{\AAA}$.
		We are then able to expand Lemma \ref{LEM_LIFTING_UP}, our lifting argument from the Rauzy case, here.
		
		\begin{coro}\label{COR_LIFTING_ZOR}
				Let matrices $\widetilde{A}_1,\dots, \widetilde{A}_N$ be associated to a Zorich path in $\IRR{n}$ and
		$\tau$ be any bijection from $\INTi{1,n}$ to an alphabet $\AAA$.
	If $t_i$ is the type of $\widetilde{A}_i$, $M_i$ is the number of underlying Rauzy matrices in the product $\widetilde{A}_i$,
		and $k_i$ satisfies $\pi^{(i)}(n) = k_i$ for $i$ such that $t_i = 1$,
		let $\widetilde{\Theta}_1,\dots,\widetilde{\Theta}_N$ be defined by
			$$
				\widetilde{\Theta}_i = \Psi_{\tau_i} \widetilde{A}_i \Psi^*_{\tau_{i+1}} \mbox{ for } i\in \INTi{1,N}
			$$
		where $\tau_1 = \tau$ and $\tau_{i+1} = \tau_i \circ \sigma_{t_i}^{M_i}$ for $i\in \INTi{1,N}$.
	Then if $(p_0,p_1)\in \PAIRS{\AAA}$ begins a Zorich path described by $\widetilde{\Theta}_1,\dots,\widetilde{\Theta}_N$ then
		exactly one of the following must hold:
	\begin{enumerate}
		\item For each $i\in \INTi{1,N}$ the move described by $\widetilde{\Theta}_i$ is type $t_i$ and $\Pi(p_0,p_1)\in \IRR{n}$
			starts a Rauzy path described by $\widetilde{A}_1,\dots,\widetilde{A}_N$.
		\item For each $i\in \INTi{1,N}$ the move described by $\widetilde{\Theta}_i$ is type $1-t_i$ and $\Pi(p_1,p_0)\in \IRR{n}$
			starts a Rauzy path described by $\widetilde{A}_1,\dots,\widetilde{A}_N$.
	\end{enumerate}
	Conversely, if $\pi\in \IRR{n}$ starts a Rauzy path described by $\widetilde{A}_1,\dots,\widetilde{A}_N$ then
		both
			$$(\tau,\pi\circ \tau)\mbox{ and }(\pi\circ \tau,\tau)$$
		start Rauzy paths described by $\widetilde{\Theta}_1,\dots,\widetilde{\Theta}_N$,
		one with types that agree with the path starting at $\pi$ and the other with opposite types.

		\end{coro}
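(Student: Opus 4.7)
The plan is to derive this corollary by reducing to the Rauzy version already established in Lemma \ref{LEM_LIFTING_UP}, using the telescoping structure of a Zorich block.

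First, I would fix a Zorich path $\widetilde{A}_1,\ldots,\widetilde{A}_N$ on $\IRR{n}$ together with its underlying Rauzy path $A_1,\ldots,A_{N'}$ and the block markers $1 = m_1 < m_2 < \cdots$, so that $\widetilde{A}_i = A_{m_i} A_{m_i + 1} \cdots A_{m_{i+1}-1}$ and all the $A_m$ with $m \in \INTio{m_i,m_{i+1}}$ share the common type $t_i$. Then applying Lemma \ref{LEM_LIFTING_UP} to the underlying Rauzy path with the starting bijection $\tau$ produces Rauzy matrices $\Theta_1,\ldots,\Theta_{N'}$ on $\PAIRS{\AAA}$ and bijections $\tau^{(m)}$ satisfying $\tau^{(1)} = \tau$, $\tau^{(m+1)} = \tau^{(m)} \circ \sigma_{t'_m}$ (with $t'_m$ the type of $A_m$), and $\Theta_m = \Psi_{\tau^{(m)}} A_m \Psi^*_{\tau^{(m+1)}}$.

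Next, I would block up these identities over one Zorich block. Since the types $t'_m$ agree with $t_i$ for $m \in \INTio{m_i,m_{i+1}}$, the bijection update within the $i$-th block is just iterated composition with $\sigma_{t_i}$, so $\tau^{(m_{i+1})} = \tau^{(m_i)} \circ \sigma_{t_i}^{M_i}$ where $M_i = m_{i+1} - m_i$. Setting $\tau_i := \tau^{(m_i)}$ gives exactly $\tau_1 = \tau$ and $\tau_{i+1} = \tau_i \circ \sigma_{t_i}^{M_i}$ as in the statement. Then the product telescopes: the intermediate factors $\Psi^*_{\tau^{(m+1)}} \Psi_{\tau^{(m+1)}}$ collapse to the $\AAA \times \AAA$ identity (using that $\Psi^*_\sigma$ is the inverse of $\Psi_\sigma$, noted explicitly in the remark after \eqref{EQ_Psi_tau_def}), yielding
\[
\widetilde{\Theta}_i := \Theta_{m_i}\Theta_{m_i + 1}\cdots \Theta_{m_{i+1}-1} = \Psi_{\tau_i}\, \widetilde{A}_i\, \Psi^*_{\tau_{i+1}},
\]
which is the matrix identity demanded by the corollary.

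Finally, I would conclude both directions from the Rauzy statement. Any Zorich path on $\PAIRS{\AAA}$ with matrices $\widetilde{\Theta}_i$ admits at least one underlying Rauzy path on $\PAIRS{\AAA}$, and the block types of this underlying path are forced by the common winner row of the consecutive $\Theta_m$ in the same block. If $(p_0,p_1) \in \PAIRS{\AAA}$ begins a Zorich path described by the $\widetilde{\Theta}_i$, it also begins the underlying Rauzy path described by the $\Theta_m$, so by Lemma \ref{LEM_LIFTING_UP} either $\Pi(p_0,p_1)$ or $\Pi(p_1,p_0)$ begins the underlying Rauzy path $A_1,\ldots,A_{N'}$ in $\IRR{n}$, with matched or opposite types throughout; since the Zorich type $t_i$ is, by definition, the common type of the Rauzy moves in block $i$, this dichotomy transfers verbatim to the Zorich path. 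The converse direction is obtained in the same way, applying the converse half of Lemma \ref{LEM_LIFTING_UP} to the underlying Rauzy path and noting that $(\tau,\pi\circ\tau)$ and $(\pi\circ\tau,\tau)$ visit exactly the right blocks. The main care required is the bookkeeping of the $\sigma_{t_i}^{M_i}$ within a block and verifying that the converse direction does not introduce new ambiguity beyond the pair/inverse-pair dichotomy already present in the Rauzy case.
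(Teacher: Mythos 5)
Your proof is correct and takes essentially the same approach as the paper: the paper's preceding discussion derives the telescoping identity $\Psi_{\tau_{i}} \widetilde{A} \Psi^*_{\tau_i \circ \sigma_t^{M}} = \widetilde{\Theta}$ for one Zorich block and then states that Lemma \ref{LEM_LIFTING_UP} "expands" to the Zorich case, which is exactly your strategy of blocking the Rauzy lifting over each Zorich segment and concluding via Lemma \ref{LEM_LIFTING_UP}. You fill in a bit more detail than the paper (explicitly writing out the collapse of the intermediate $\Psi^*_{\tau^{(m+1)}}\Psi_{\tau^{(m+1)}}$ factors and naming the two directions), but the decomposition, the key lemma, and the conclusion are the same.
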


	\subsection{Breaking Up Zorich Matrices}\label{SSEC_BREAKUP}
	
		If $\widetilde{\Theta}$ is a matrix describing a move of Zorich induction in $\AAA$,
			the winner $a$ will be uniquely identified by the row with
			at least one positive entry other than the diagonal $\widetilde{\Theta}(a,a)$.
		The loser set $\LLL$ is then
			$$
				\LLL := \{b\in \AAA\setminus\{a\}: ~\widetilde{\Theta}(a,b) \geq 1\}.
			$$
		Let $M = \max\{\widetilde{\Theta}(a,b):~a,b\in\AAA\}$.
		If $M \geq 2$, let
			$$
				\LLL_{max} := \{b\in \LLL:~ \widetilde{\Theta}(a,b) = M\}.
			$$
			Because we know that a step of Zorich induction
			must apply Rauzy induction to each element of $\LLL$ in some order
			before applying induction to any element again, $\LLL_{min}:= \LLL \setminus \LLL_{max}$
			satisfies
			$$
			 \LLL_{min} = \{b\in \LLL:~\widetilde{\Theta}(a,b) = M-1\}.
			$$
		
		If $\LLL_{min}\neq \emptyset$, we know that the move $\widetilde{\Theta}$
			was composed of $M-1$ cycles of Rauzy induction, each move with winner $a$
				and having each element of $\LLL$ as a loser in an unknown order once per cycle.
		The Zorich move then ended with a parital cycle of Rauzy moves,
			$a$ being the winner and each letter of $\LLL_{max}$ losing in the same (unknown) order as in the previous cycle.
		(Therefore, in the cycles the elements of $\LLL_{max}$ lost before the elements of $\LLL_{min}$.)
		In this case, let $\Lambda$ and $\Lambda_{min}$ be $\AAA\times \AAA$ matrices given by
			$$
				\Lambda(\alpha,\beta) = \begin{cases}
							1, & \mbox{if }\alpha = \beta,\\
							1, & \mbox{if }\alpha = a\mbox{ and }\beta\in \LLL,\\
							0, & \mbox{otherwise.}
							\end{cases}~
				\Lambda_{max}(\alpha,\beta) = \begin{cases}
							1, & \mbox{if }\alpha = \beta,\\
							1, & \mbox{if }\alpha = a\mbox{ and }\beta\in \LLL_{max},\\
							0, & \mbox{otherwise.}
							\end{cases}
			$$
		Then
			$
			\widetilde{\Theta} = \Lambda^{M-1} \Lambda_{max}
			$
		and we may use these $M$ matrices in place of $\widetilde{\Theta}$
			in any Zorich path.
		
		By applying this splitting as needed, we may ensure that our Zorich path
			has all matrices with entries at most $1$.			
		Furthermore, if for some Zorich move on $\IRR{n}$
			described by $\widetilde{A}$ we have type $t$ and bijection $\tau$
				so that 
			$$
				\widetilde{\Theta} = \Psi_\tau \widetilde{A} \Psi^*_{\tau'}
			$$
			where $\tau' = \tau \circ \sigma_t^p$ where $p$ is the number of steps of Rauzy induction that
			$\widetilde{A}$ represents.

	\section{Main Result}\label{SEC_MAIN1}
	
	In this section we will present our first main result.
	After defining partially ordered pairs in Section \ref{SSEC_POPAIRS},
		we will justify and construct Algorithm \ref{ALGO_ZORICH}
		in Section \ref{SSEC_ALGO_DEF}
		with example runs of the algorithm in the section that follows.
	In Section \ref{SSEC_MAIN1_PROOF} we then prove the main theorem.
	
	\subsection{Partially Ordered Pairs}\label{SSEC_POPAIRS}
	
	In our upcoming algorithm, we need to track the information we
		currently have about our pairs.
	The following will help make this precise.
	
		\begin{defi}\label{DEF_POPAIRS}
		For finite $\AAA$,
			a \emph{partially ordered pair}
			is a tuple $(\QQQ_0,\QQQ_1)$
			such that, for each $t\in \{0,1\}$,
			$$\QQQ_t = \big(Q_{t,1},\dots, Q_{t,{m_t}}\big),$$
			is a tuple of subsets of $\AAA$, where $m_t:=|\QQQ_t|$, such that
			$\{Q_{t,j}:~j\in \INTi{1,m_t}\}$ is a partition of $\AAA$.
	\end{defi}
	
	We want $(\QQQ_0,\QQQ_1)$ to be our best current guess at
		the underlying pair $(p_0,p_1)\in \PAIRS{\AAA}$,
		and the following definition makes this more precise.
		
	\begin{defi}
		A pair $(p_0,p_1)$ over $\AAA$, with $n = |\AAA|$,
			\emph{agrees with partially ordered pair} $(\QQQ_0,\QQQ_1)$
			if for each $t\in \{0,1\}$ and $Q \in \QQQ_t$ the set $p_t(Q) \subseteq \INTi{1,n}$
			is an interval and for $\alpha,\beta\in \AAA$ if $\alpha\in Q_{t,j}$
			and $\beta\in Q_{t,j'}$ such that $j< j'$ we have $p_t(\alpha) < p_t(\beta)$.
		A partially ordered pair $(\QQQ_0,\QQQ_1)$ is \emph{irreducible}
			if there exists $(p_0,p_1)\in \PAIRS{\AAA}$ that agrees with $(\QQQ_0,\QQQ_1)$.
	\end{defi}

	As we build our algorithm in the next section,
		we will often need to refine our ordered partitions.
	To avoid too many subcases,
		we may end up defining a new tuple $\QQQ_t'$ that possibly contains
		entries that are the empty set $\emptyset$.
	We will always assume that we remove these entries in $\QQQ_t'$
		but to reinforce this with notation we will define the map
		$\star$ that removes these elements.
	For a fixed tuple $(Q_1,\dots, Q_m)$, $Q_i\subseteq \AAA$ for each $i\in \INTi{1,m}$,
		define $\mu$ with domain $\INTi{0,m}$ by
		$$
			\mu(i) := \begin{cases}
				|\{j \in \INTi{1,i}:~Q_j \neq \emptyset\}|, & i \in \INTi{1,m},\\
				0, & i = 0,
				\end{cases}
		$$
		and let $m' = \mu(m)$.
	Then define $\xi:\INTi{1,m'} \to \INTi{1,m}$ by
		$$
			\xi(i) = j \iff \mu(j) = i \mbox{ and } \mu(j-1) = i-1.
		$$
	We then define our map $\star$ as
			\begin{equation}\label{EQ_STAR}
				\star\big(Q_1,\dots,Q_m\big) = \big(Q_{\xi(1)},\dots,Q_{\xi(m)}\big),
			\end{equation}
	assuming that at least one $Q_i$ is non-empty.

	\subsection{The Algorithm}\label{SSEC_ALGO_DEF}

	We now construct and justify our algorithm.
	Assume $n = |\AAA|$ is the number of letters in our alphabet with $n\geq 3$.
	We consider the information we learn about our candidate pair $(p_0,p_1)$
		given knowledge of \emph{the resulting} pair $(p_0',p_1')$,
		the winner $w$ and the loser set $\LLL$.
	We also assume for the moment that we know the type $t$ of the move (we will address this below).
	Then $w$ must the rightmost element in both $p_t$ and $p_t'$ as $p_t = p_t'$.
	The letters $\LLL$ must be the rightmost $|\LLL|$
		elements of $p_{1-t}$.
	They also must be the letters to the immediate right of $w$ in $p_{1-t}'$,
		and the letters in $\LLL$ must appear consecutively and in the same relative order
		in both $p_{1-t}$ and $p_{1-t}'$.
	Note that, aside from the previous observations,
		we may not know very much about $p_{1-t}'$.
	In particular, while we know that $w$ appears with the letters of $\LLL$
		to the right, the position of $w$ may be unknown.
	However, we do not need this information to place $\LLL$ as the rightmost letters
		in $p_{1-t}$.
		
	Given that we gain information about the pre-image of
		a move of Zorich induction, we will iterate through our Zorich path
		described by
		$$
			(w_1,\LLL_1),\dots,(w_N,\LLL_N),
		$$
	\emph{in reverse},
		meaning we use the information from the move, $w_j,\LLL_j$,
		and what we currently know about $(p_0^{(j+1)},p_1^{(j+1)})$,
		given by $(\QQQ_0^{(j+1)},\QQQ_1^{(j+1)})$,
		to construct $(\QQQ_0^{(j)},\QQQ_1^{(j)})$
			which represents what we currently know about $(p_0^{(j)},p_1^{(j)})$.
	We will know the initial pair $(p_0,p_1)$, up to switching rows,
		if $(\QQQ_0,\QQQ_1)$ is such that $|\QQQ_t| = n$ for each $t\in \{0,1\}$
		or equivalently $\QQQ_t$ is an ordered partition of $\AAA$ into singletons.
	
	For the last move $(w_N,\LLL_N)$ we do not know anything about the
		final pair
		$(p_0^{(N+1)},p_1^{(N+1)})$,
		or
			$$\QQQ_0^{(N+1)} = \QQQ_1^{(N+1)} = \{\AAA\}.$$
	While we cannot know the type $t_N$ of this move,
		by Proposition \ref{PROP_PAIRS_BOTH_TYPES}
		there would exist a Zorich path with one type $t_N$
		and another with the opposite type $1-t_N$.
	So we choose, say, $t_N=0$
		with the understanding that the choice $t_N =1$
		would result another path on the inverse initial pair.
	Because $w_N$ is the winner, we then may say with certainty
		that $w_N$ is the rightmost letter in $\QQQ^{(N)}_0$
		and $\LLL_N$ as a set must be the rightmost letters in $\QQQ_1^{(N)}$,
		although we do not know their relative order unless $|\LLL_N|=1$.
	(This is Step \stepref{ALG2_STEP_1} in Algorithm \ref{ALGO_ZORICH}.)

	Now for $j\in \INTi{1,N-1}$, we will use our knowledge of the type $t_{j+1}$
		of the most recently considered move and the partially ordered pair
		$(\QQQ_0^{(j+1)},\QQQ_1^{(j+1)})$ to determine the type $t_j$ and construct
		$(\QQQ_0^{(j)},\QQQ_1^{(j)})$. 
	First, if we have that the winner $w_j$ is the same as the previously considered winner $w_{j+1}$,
		then by irreducibility the type $t_j$ must be the same as $t_{j+1}$.
	Conversely, if $w_j \neq w_{j+1}$ then it must be that $t_j$ is $1 - t_{j+1}$,
		the opposite type.
	(This is Step \stepref{ALG2_STEP_2a} in Algorithm \ref{ALGO_ZORICH}.)
	In either case, it must be that $w_j$ is in the rightmost element of $\QQQ_{t_j}^{(j+1)}$
		provided the assumed correctness of the given Zorich path.
	We then construct $\QQQ^{(j)}_{t_j}$ by preserving the order of $\QQQ_{t_j}^{(j+1)}$ except
		we possibly split the rightmost element into two: $\{w_j\}$ being the rightmost element
		with its complement (if non-empty) being to its immediate left.
	(This is Step \stepref{ALG2_STEP_2b} in Algorithm \ref{ALGO_ZORICH}.)

	We need to consider cases in order to construct $\QQQ_{1-t_j}^{(j)}$.
	Suppose first that the set $\LLL_j$
		is not contained in one element of $\QQQ_{1-t_j}^{(j+1)}$.
	There then must be two or more consecutive elements in $\QQQ_{1-t_j}^{(j+1)}$
		that have non-empty intersection with $\LLL_j$
		and so the position of the set $\LLL_j$ in the \emph{previously considered} $p_{1-t_j}^{(j+1)}$
		is now exactly known. (Meaning we know the first and last positions of $\LLL_j$ as a set
			but may not know the exact order of each $\ell\in \LLL_j$ in $p_{1-t_j}^{(j+1)}$.)
	Let $Q_L$ be the leftmost element in $\QQQ_{1-t_j}^{(j+1)}$ that overlaps
		with $\LLL_j$ and left $Q_R$ be the rightmost element.
	We know that in $\QQQ_{1-t_j}^{(j+1)}$, the elements of $Q_R$
		actually appear in the order $Q_R\cap \LLL_j$ and then $Q_R \setminus \LLL_j$.
	(It is possible that $Q_R\setminus \LLL_j = \emptyset$ but $Q_R\cap \LLL_j$ is non-empty by definition.)
	As we will now explain, we also know the position of $w_j$ in $\QQQ_{1-t_j}^{(j+1)}$.
	If $w_j \in Q_L$, then we know that the elements of $Q_L$ in $\QQQ_{1-t_j}^{(j+1)}$
		actually appear in the order $Q_L\setminus(\LLL_j \cup \{w_j\})$, $\{w_j\}$, $Q_L \cap \LLL_j$
			(from left to right).
	(It is possible that $Q_L\setminus(\LLL_j \cup \{w_j\}) = \emptyset$ but the others must be non-empty.)
	If $w_j\not\in Q_L$, then because $w_j$ must be the letter to the immediate left
		of $\LLL_j$ in $\QQQ_{1-t_j}^{(j+1)}$, it must be that $Q_L \subsetneq \LLL_j$
		and $w_j$ is rightmost element of $Q_{L}'$, the element of $\QQQ_{1-t_j}^{(j+1)}$ to the
		immediate left of $Q_L$.
		
	Because of these new divisions, every element of $\QQQ_{1-t_j}^{(j+1)}$ is now either contained
		in $\LLL_j$ or disjoint from $\LLL_j$.
	We create $\QQQ_{1-t_j}^{(j)}$
		by leaving moving all elements contained in $\LLL_j$ to be the rightmost
		elements while preserving their relative order.
	(This process is given by Steps \stepref{ALG2_STEP_2f} and \stepref{ALG2_STEP_2g}, depending
		on whether $w_j \in Q_L$ or $w_j \notin Q_L$.)

	In the remaining case $\LLL_j$ is contained in one element $Q$ of $\QQQ_{1-t_j}^{(j+1)}$.
	Just as before, either $w_j\in Q$ or $w_j\not\in Q$.
	If $w_j\not\in Q$, then we know that $\LLL_j$ forms the leftmost letters of $Q$
		and $w_j$ is the rightmost letter of $Q'$, the element in $\QQQ_{1-t_j}^{(j+1)}$
			to the immediate left of $Q$.
	We then may divide $Q'$ in $\QQQ_{1-t_j}^{(j+1)}$
		into $Q'\setminus \{w_j\}$ on the left and $\{w_j\}$ on the right,
			and we may divide $Q$ into $\LLL_j$ on the left and $Q\setminus \LLL_j$
			on the right.
	(It may be that $Q'\setminus \{w_j\}$, $Q\setminus \LLL_j$ or both are empty.)
	We then construct $\QQQ_{1-t_j}^{(j)}$ by moving $\LLL_j$ to be the rightmost element.
	(This is Step \stepref{ALG2_STEP_2e}.)

	If instead $w_j\in Q$, then we cannot necessarily determine the location of $w_j$
		or $\LLL_j$ within $Q$.
	We then construct $\QQQ_{1-t_j}^{(j)}$
		by making $\LLL_j$ the rightmost element and leaving (non-empty) $Q\setminus \LLL_j$
		in its relative position according to $\QQQ_{1-t_j}^{(j+1)}$.
	(This is Step \stepref{ALG2_STEP_2d}.)
	
	As a note to the cautious reader regarding the previous step,
		it is possible that the true division of $Q$ in $\QQQ_{1-t_j}^{(j+1)}$
		is, from left to right, $Q'$, $\{w_j\}$, $\LLL_j$ then $Q''$ with $Q',Q''$ non-empty.
	However, when we move $\LLL_j$ to the right, the remaining elements still appear in the order
		$Q'$, $\{w_j\}$ then $Q''$.
	So even though we may not know $Q'$ and $Q''$, we know the location of $Q \setminus \LLL_j = Q' \cup \{w_j\}\cup Q''$ in
		newly constructed $\QQQ_{1-t_j}^{(j)}$.
		
	We now present Algorithm \ref{ALGO_ZORICH} as just explained.
	To allow for independent reading of the algorithm,
		notation differs slightly from the previous discussions.
	Recall the $\star$ operation removes empty set valued coordinates from a tuple.

	\begin{algo}\label{ALGO_ZORICH}
	Assume $|\AAA| \geq 3$ and consider the list of winner letter/loser set tuples
		$$
			(w_1,\LLL_1),\dots, (w_N,\LLL_N)
		$$
	of a Zorich path of length $N\geq 1$.
	\begin{enumerate}
		\renewcommand{\theenumi}{\texttt{\arabic{enumi}}}
		\renewcommand{\labelenumi}{\texttt{\color{red}[\theenumi]}}
		
		\item\label{ALG2_STEP_1} Let $t_N= 0$,
			$$
				\QQQ_0^{(N)} = \big(\AAA\setminus \{w_N\}, \{w_N\}\big) \mbox{ and }
				\QQQ_1^{(N)} = \big(\AAA\setminus \LLL_N, \LLL_N\big).
			$$
			
		\item\label{ALG2_STEP_2} Starting with $j = N-1$ and iterating down to $j=1$:
			\begin{enumerate}
				\renewcommand{\theenumii}{\texttt{.\arabic{enumii}}}
				\renewcommand{\labelenumii}{\texttt{\color{red}[\theenumi\theenumii]}}

				\item\label{ALG2_STEP_2a} Let
					$$
						t_j = \begin{cases}	
								t_{j+1},& \mbox{if }w_j = w_{j+1},\\
								1-t_{j+1},& \mbox{if }w_j \neq w_{j+1}.
							\end{cases}
					$$
					
				\item \label{ALG2_STEP_2b} Let $\QQQ_{t_j}^{(j)}$ be 
					$$
						\star\big(Q_{t_j,1}^{(j+1)}, \dots ,Q_{t_j,m' - 1}^{(j+1)},
							Q_{t^{(j)},m'}^{(j+1)} \setminus \{w_j\}, \{w_j\}\big),
					$$
					where $m' = m_{t_j}^{(j+1)}$.

				\item\label{ALG2_STEP_2c} Fix notation
					$$
						m := m^{(j+1)}_{1-t_j}\mbox{ and }
						\QQQ^{(j+1)}_{1-t_j} = \big(Q_1,\dots, Q_m\big).
					$$

				\item\label{ALG2_STEP_2d} If for some $i\in \INTi{1,m}$ we have $\LLL_j \subseteq Q_i$
					and  $w_j \in Q_i$,
					let $\QQQ_{1-t_j}^{(j)}$ be
						$$
							 \star\big(Q_1,\dots, Q_{i-1},Q_i \setminus \LLL_j
							, Q_{i+1},\dots, Q_m, \LLL_j\big).
						$$
				\item\label{ALG2_STEP_2e} If for some $i\in \INTi{1,m}$ we have $\LLL_j \subseteq Q_i$
					but $w_j \not\in Q_i$,
					let $\QQQ_{1-t_j}^{(j)}$ be
						$$
							 \star\big(Q_1,\dots, Q_{i-1}\setminus \{w_j\}, \{w_j\},Q_i \setminus \LLL_j
							, Q_{i+1},\dots, Q_m, \LLL_j\big).
						$$
						
				\item\label{ALG2_STEP_2f} If there exist $i_0,i_i\in \INTi{1,m}$, $i_0 <i_1$, so that
					\begin{equation}\label{EQ_ALG_PAIR_COND}
						\LLL_j \cap Q_i \neq \emptyset \mbox{ for all } i\in \INTi{i_0,i_1}
						\mbox{ and }
						\LLL_j \subseteq \bigcup_{i=i_0}^{i_1} Q_i
					\end{equation}
					and $w_j \in Q_{i_0}$, then let $\QQQ_{1-t_j}^{(j)}$ be
					$$
						\begin{array}{lr}
						\multicolumn{2}{l}{\star\big(Q_1,\dots,Q_{i_0} \setminus (\LLL_j\cup\{w_j\}),\{w_j\}, Q_{i_1}\setminus \LLL_j,
							Q_{i_1+1},\dots, Q_m, \quad\quad\quad}\\
							& Q_{i_0}\cap \LLL_j, Q_{i_0+1},\dots, Q_{i_1-1},Q_{i_1}\cap \LLL_j\big).
						\end{array}
					$$
				\item\label{ALG2_STEP_2g} If \eqref{EQ_ALG_PAIR_COND} holds but $w_j \not\in Q_{i_0}$,
					then let $\QQQ_{1-t_j}^{(j)}$ be
					$$
						\begin{array}{lr}
						\multicolumn{2}{l}{ \star\big(Q_1,\dots, Q_{i_0-1}\setminus \{w_j\},\{w_j\},Q_{i_1} \setminus \LLL_j,
							Q_{i_1+1},\dots, Q_m, \quad\quad\quad}\\
							& Q_{i_0}, Q_{i_0+1},\dots, Q_{i_1-1},Q_{i_1}\cap \LLL_j\big).
						\end{array}
					$$
			\end{enumerate}
		
	\end{enumerate}
	\end{algo}
	
	\subsection{Examples}
	
	Before stating and proving our main result,
		we will for concreteness demonstrate Algorithm \ref{ALGO_ZORICH}
		on a few example Zorich paths.
	We will also make a few observations about the outcomes.
	
	\begin{exam}
		For $\AAA = \INTi{1,6}$, consider the Zorich path given by winner/loser pairs
			$$
				(w_1,\LLL_1) = \big(1,\{2,3\}\big),~
				(w_2,\LLL_2) = \big(4,\{1,5\}\big),~
				(w_3,\LLL_3) = \big(6,\{2,3,4\}\big),
			$$
		and we apply Algorithm \ref{ALGO_ZORICH}.
		Starting with Step \stepref{ALG2_STEP_1}, we make $t_3 = 0$ and
			$$
				\big(\QQQ_0^{(3)},\QQQ_1^{(3)}\big) = \big(\big(\{1,2,3,4,5\},\{6\}),(\{1,5,6\},\{2,3,4\}\big)\big).
			$$
		We then move to $j=2$.
		Because $w_2 \neq w_3$, we have $t_2 = 1$ (Step \stepref{ALG2_STEP_2a}) and by Step \stepref{ALG2_STEP_2b}
			we get
				$$
					\QQQ_1^{(2)} = \big(\{1,5,6\},\{2,3\},\{4\}\big),
				$$
			and by Step \stepref{ALG2_STEP_2d} we get
				$$
					\QQQ_0^{(2)} = \big(\{2,3,4\},\{6\},\{1,5\}\big).
				$$
		We then consider $j=1$.
		Following Step \stepref{ALG2_STEP_2a} we have $t_1 = 0$.
		Step \stepref{ALG2_STEP_2b} then gives
			$$
				\QQQ_0^{(1)} = \big(\{2,3,4\},\{6\},\{5\},\{1\}\big),
			$$
		and Step \stepref{ALG2_STEP_2g} gives
			$$
				\QQQ_1^{(1)} = \big(\{5,6\},\{1\},\{4\},\{2,3\}\big).
			$$
		We then have that the possible pairs in $\PAIRS{\AAA}$ that can start
			this Zorich path must satisfy the following:
				one map $p_t$ places letter $6$ in position $4$, letter $5$ in position $5$
					and letter $1$ in position $6$
				and the other map $p_{1-t}$ places letters $5$ and $6$ as the first two letters,
					places letters $2$ and $3$ as the last two letters,
						places letter $1$ in position $3$ and places letter $4$ in position $4$.
	\end{exam}
	
	In the previous example, we were left with uncertainty concerning the pair that started this Zorich path.
	In fact, the reader may verify that any pair satisfying those conditions must be irreducible, so there are
			$2 \cdot 2! \cdot 2! \cdot 3! = 48$ possible starting irreducible pairs.
	This is ambiguity is not surprising as the path was not complete (letters $2,3,5$ did not win).
	The next example will consider a $1$-complete path, and the outcome will be quite different.

	\begin{exam}
		For $\AAA = \{A,B,C,D,E\}$, consider the $1$-complete Zorich path given by winner/loser pairs
			$$
				(w_1,\LLL_1) = \big(E,\{A,B\}\big),~
				(w_2,\LLL_2) = \big(C,\{E\}\big),~
				(w_3,\LLL_3) = \big(D,\{C\}\big),
			$$
			$$
				(w_4,\LLL_4) = \big(C,\{D\}\big),~
				(w_5,\LLL_5) = \big(E,\{C,D\}\big),
			$$
			$$
				(w_6,\LLL_6) = \big(A,\{C,D,E\}\big),~
				(w_7,\LLL_7) = \big(B,\{A\}\big),
			$$
		By Algorithm \ref{ALGO_ZORICH}:
			$$
				\begin{array}{cl}
					\QQQ_0^{(7)} = \big(\{A,C,D,E\},\{B\}\big) \mbox{ and }
					\QQQ_1^{(7)} = \big(\{B,C,D,E\},\{A\}\big),&   \mbox{by Step }\stepref{ALG2_STEP_1},\\
					\QQQ_1^{(6)} = \QQQ_1^{(7)} = \big(\{B,C,D,E\},\{A\}\big),&   \mbox{by Step }\stepref{ALG2_STEP_2b},\\
					\QQQ_0^{(6)} = \big(\{A\},\{B\},\{C,D,E\}\big), &  \mbox{by Step }\stepref{ALG2_STEP_2d},\\
					\QQQ_0^{(5)} = \big(\{A\},\{B\},\{C,D\},\{E\}\big), &  \mbox{by Step }\stepref{ALG2_STEP_2b},\\
					\QQQ_1^{(5)} = \big(\{B,E\},\{A\},\{C,D\}\big),&   \mbox{by Step }\stepref{ALG2_STEP_2d},\\
					\QQQ_1^{(4)} = \big(\{B,E\},\{A\},\{D\},\{C\}\big),&   \mbox{by Step }\stepref{ALG2_STEP_2b},\\
					\QQQ_0^{(4)} = \big(\{A\},\{B\},\{C\},\{E\},\{D\}\big), &  \mbox{by Step }\stepref{ALG2_STEP_2d},\\
					\QQQ_0^{(3)} =\QQQ_0^{(4)} = \big(\{A\},\{B\},\{C\},\{E\},\{D\}\big), &  \mbox{by Step }\stepref{ALG2_STEP_2b},\\
					\QQQ_1^{(3)} = \QQQ_1^{(4)} = \big(\{B,E\},\{A\},\{D\},\{C\}\big),&   \mbox{by Step }\stepref{ALG2_STEP_2e},\\
					\QQQ_1^{(2)} = \QQQ_1^{(3)} = \big(\{B,E\},\{A\},\{D\},\{C\}\big),&   \mbox{by Step }\stepref{ALG2_STEP_2b},\\
					\QQQ_0^{(2)}  = \big(\{A\},\{B\},\{C\},\{D\},\{E\}\big), &  \mbox{by Step }\stepref{ALG2_STEP_2e},\\
					\QQQ_0^{(1)}  = \QQQ_0^{(2)}  = \big(\{A\},\{B\},\{C\},\{D\},\{E\}\big), &  \mbox{by Step }\stepref{ALG2_STEP_2b},\\
					\QQQ_1^{(1)} = \big(\{E\},\{D\},\{C\},\{B\},\{A\}\big),&   \mbox{by Step }\stepref{ALG2_STEP_2f},\\
				\end{array}
			$$
		and we conclude that the only elements of $\PAIRS{\AAA}$ that can begin this Zorich path is
			$(p_0,p_1)$ and its inverse $(p_1,p_0)$ where
				$p_0$ orders $\AAA$ alphabetically and $p_1$ reverses the order.
	\end{exam}
	
	As opposed to the first example, there is a unique starting pair up to taking inverses.
	However, as our next example will demonstrate, it not generally true that simply being complete is enough to
		ensure uniqueness.
	
	\begin{exam}
		For $\AAA = \INTi{1,8}$, consider the $1$-complete Zorich path given by winner/loser pairs
			$$
				(w_1,\LLL_1) = \big(8,\{1,2,3,4,6\}\big),~
				(w_2,\LLL_2) = \big(7,\{8\}\big),~
				(w_3,\LLL_3) = \big(6,\{7\}\big),
			$$
			$$
				(w_4,\LLL_4) = \big(5,\{6\}\big),~
				(w_5,\LLL_5) = \big(4,\{5\}\big),~
				(w_6,\LLL_6) = \big(3,\{4\}\big),
			$$
			$$
				(w_7,\LLL_7) = \big(2,\{3\}\big),~
				(w_8,\LLL_8) = \big(1,\{2\}\big).
			$$
		By Step \stepref{ALG2_STEP_1} followed by repeated applications of Step \stepref{ALG2_STEP_2d}
			we get
			$$
				\QQQ_0^{(2)} = \big(\{2,4,6,8\},\{1\},\{3\},\{5\},\{7\}\big),
			$$
			$$
				\QQQ_1^{(2)} = \big(\{1,3,5,7\},\{2\},\{4\},\{6\},\{8\}\big).
			$$
		We then apply Step \stepref{ALG2_STEP_2f} to get
			$$
				\QQQ_0^{(1)} = \big(\{2,8\},\{5\},\{7\},\{4,6\},\{1\},\{3\}\big),
			$$
			$$				
				\QQQ_1^{(1)} = \QQQ_1^{(2)} = \big(\{1,3,5,7\},\{2\},\{4\},\{6\},\{8\}\big).
			$$
		So any pair $(p_0,p_1)\in \PAIRS{\AAA}$ or its inverse $(p_1,p_0)$ that agrees with
			$(\QQQ_0^{(1)}, \QQQ_1^{(1)})$ can be an initial pair for this Zorich path.
		There are $2\cdot 2! \cdot 2! \cdot 4! = 192$ possibilities
			as we may verify that any choice will be irreducible.
	\end{exam}
	
	Before moving to the next section, we make a few observations that form the primary arguments in the next section's proof.
	First, because every letter wins in a complete path, the application of Steps \stepref{ALG2_STEP_1}/\stepref{ALG2_STEP_2b}
		ensure that each letter must appear as a singleton in at least one of $\QQQ^{(1)}_0$
			or $\QQQ^{(1)}_1$.
	Second, Steps \stepref{ALG2_STEP_2d} and \stepref{ALG2_STEP_2f} necessarily
		make $\QQQ_{1-{t_j}}^{(j)}$ with more elements than $\QQQ_{1- t_j}^{(j+1)}$.
	Finally, any of the Steps \stepref{ALG2_STEP_2e} -- \stepref{ALG2_STEP_2g}
		result in $\QQQ_{1-{t_j}}^{(j)}$ that contains $\{w_j\}$ as an element, making $\{w_j\}$
			an element of both $\QQQ_0^{(j)}$ and $\QQQ_1^{(j)}$.

	\subsection{Main Theorem and Proof}\label{SSEC_MAIN1_PROOF}
	
	Before stating our main result, we provide the following correctness result for Algorithm \ref{ALGO_ZORICH}.
	The proof follows naturally from the discussion about the algorithm's construction
		and induction on the length of a Zorich path.
	
	\begin{lemm}\label{LEM_ALGO_CORRECT}
		For a finite Zorich path on $\AAA$, let $(\QQQ_0^{(1)},\QQQ_1^{(1)})$ be the partially ordered
			pair obtained by Algorithm \ref{ALGO_ZORICH}.
		Then for any $(p_0,p_1)\in \PAIRS{\AAA}$
			the following are equivalent:
			\begin{enumerate}
				\item $(p_0,p_1)$ is an initial pair for the Zorich path,
				\item either $(p_0,p_1)$ agrees with $(\QQQ_0^{(1)}, \QQQ_1^{(1)})$
					or $(p_1,p_0)$ agrees with $(\QQQ_0^{(1)}, \QQQ_1^{(1)})$.
			\end{enumerate}
	\end{lemm}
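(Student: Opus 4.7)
The plan is to prove the equivalence by downward induction on $j$, establishing a stronger claim: for every $j \in \INTi{1,N}$, a pair $(p_0^{(j)},p_1^{(j)}) \in \PAIRS{\AAA}$ agrees with $(\QQQ_0^{(j)},\QQQ_1^{(j)})$ if and only if it can appear as the $j$-th pair in some Zorich path whose remaining tuples are $(w_j,\LLL_j),\ldots,(w_N,\LLL_N)$ executed with types $t_j,\ldots,t_N$ as computed by the algorithm. Once this is shown for $j=1$, the up-to-inverse statement of the lemma follows because the choice $t_N = 0$ in Step \stepref{ALG2_STEP_1} selects one of the two orientations guaranteed by Proposition \ref{PROP_PAIRS_BOTH_TYPES}; the other orientation corresponds to swapping rows.

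For the base case $j = N$, observe that once $t_N$ is fixed to $0$, the requirement that a single Zorich move with winner $w_N$ and loser set $\LLL_N$ can be executed at $(p_0^{(N)},p_1^{(N)})$ forces $w_N$ to be the rightmost letter of $p_0^{(N)}$ and forces $\LLL_N$ to be exactly the rightmost $|\LLL_N|$ letters of $p_1^{(N)}$, with the relative order among $\LLL_N$ unconstrained. This is precisely what $\QQQ_0^{(N)} = (\AAA\setminus\{w_N\},\{w_N\})$ and $\QQQ_1^{(N)} = (\AAA\setminus\LLL_N,\LLL_N)$ encode.

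For the inductive step, suppose the claim holds at $j+1$. The type determination in Step \stepref{ALG2_STEP_2a} uses irreducibility: taking $k = |\AAA|-1$ in Definition \ref{DEF_PERM_IRR} applied to $(p_0^{(j+1)},p_1^{(j+1)})$ forces $w_j$ to be the rightmost letter of exactly one row, namely row $t_j$; comparing with the winner $w_{j+1}$ (which is rightmost in row $t_{j+1}$) forces $t_j = t_{j+1}$ iff $w_j = w_{j+1}$. Since row $t_j$ is unchanged by the Zorich move, Step \stepref{ALG2_STEP_2b} correctly refines $\QQQ_{t_j}^{(j+1)}$ by peeling $\{w_j\}$ off as the rightmost cell of $\QQQ_{t_j}^{(j)}$. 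For row $1-t_j$, the move rearranges $p_{1-t_j}^{(j)}$ into $p_{1-t_j}^{(j+1)}$ by taking the rightmost block $\LLL_j$ and reinserting it immediately to the right of $w_j$ (in the same relative order within $\LLL_j$). The reverse operation, moving the block of $\LLL_j$ letters from just-after-$w_j$ back to the rightmost positions, is what Steps \stepref{ALG2_STEP_2d}--\stepref{ALG2_STEP_2g} perform, with the four cases distinguished by whether $\LLL_j$ is contained in a single cell or spans multiple cells of $\QQQ_{1-t_j}^{(j+1)}$, and by whether $w_j$ lies in the same cell as the leftmost element of $\LLL_j$.

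The main obstacle is the sufficient-direction bookkeeping in these four cases: one must verify that every pair agreeing with the refined $(\QQQ_0^{(j)},\QQQ_1^{(j)})$ is in fact a legitimate predecessor, i.e., that the resulting pair after one Zorich move with data $(w_j,\LLL_j)$ still agrees with $(\QQQ_0^{(j+1)},\QQQ_1^{(j+1)})$. In cases \stepref{ALG2_STEP_2d} and \stepref{ALG2_STEP_2f}, this requires checking that no new refinement of $\QQQ_{1-t_j}^{(j+1)}$ is forced beyond what the algorithm recorded (in particular, that the unknown positions of $w_j$ and of $\LLL_j$ within a shared cell are genuinely free parameters); in cases \stepref{ALG2_STEP_2e} and \stepref{ALG2_STEP_2g}, one must check that singling out $\{w_j\}$ is exactly what the Zorich move requires and not an over-constraint. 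Each verification is a direct comparison of the algorithm's output with the arrangement of $\{w_j\}\cup\LLL_j$ as a consecutive block in row $1-t_j$ before and after the move, so the argument is finite case analysis rather than conceptually deep, and irreducibility at step $j$ is preserved automatically once irreducibility at step $j+1$ is assumed and the rightmost-element constraints have been imposed.
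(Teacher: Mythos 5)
Your argument is correct and matches the paper's intended proof, which the text compresses to a single line: the lemma ``follows naturally from the discussion about the algorithm's construction and induction on the length of a Zorich path.'' Your downward induction on $j$ with the strengthened inductive hypothesis, the handling of the global type ambiguity via Proposition~\ref{PROP_PAIRS_BOTH_TYPES}, and the case-by-case verification that Steps~\stepref{ALG2_STEP_2d}--\stepref{ALG2_STEP_2g} impose exactly (and no more than) the necessary constraints is precisely that induction spelled out; the only slip is citing Definition~\ref{DEF_PERM_IRR} (permutation irreducibility) where the unnumbered pair-irreducibility definition from Section~\ref{SSEC_RAUZY_DEF} is the one actually applied, though the condition at $k = |\AAA|-1$ is the same in content.
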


	\begin{theo}\label{THM_ZPAIR}
		A $C$-complete Zorich path on $\AAA$, $C \geq \log_2(|\AAA|+1)-1$, has its initial pair uniquely determined up to
			inverses.
	\end{theo}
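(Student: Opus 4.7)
The plan is to combine Lemma \ref{LEM_ALGO_CORRECT} with a halving estimate on block sizes in the partially ordered pairs produced by Algorithm \ref{ALGO_ZORICH}. By that lemma, the initial pair of the Zorich path is determined up to inverses precisely when both $\QQQ_0^{(1)}$ and $\QQQ_1^{(1)}$ consist only of singletons, so it suffices to show that this occurs whenever the path is $C$-complete with $C \geq \log_2(|\AAA|+1)-1$.

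First I would fix an underlying $C$-complete Rauzy path and a splitting into $C$ consecutive complete sub-paths, then track, at each intermediate state of the algorithm, the \emph{dispersion} $D^{(j)}$ defined as the maximum cardinality of any block appearing in $\QQQ_0^{(j)}$ or $\QQQ_1^{(j)}$. The heart of the argument is the following halving lemma: if the algorithm begins processing one complete sub-path in a state with dispersion $m$, then after that sub-path has been processed the new dispersion is at most $\lceil m/2\rceil$ (with whatever minor sharpening is needed to close the arithmetic).

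To prove the halving lemma I would fix any block $B$ appearing in $\QQQ_t^{(j_1)}$ after one sub-path $(w_{j_1},\LLL_{j_1}),\dots,(w_{j_2},\LLL_{j_2})$ has been processed in reverse and argue that $|B|$ is at most half the size of a common ancestor block at step $j_2+1$. The three structural observations recorded at the end of Section \ref{SSEC_ALGO_DEF} are essential: completeness of the sub-path forces every letter to win at least once, so Step \stepref{ALG2_STEP_2b} creates a singleton for that letter in one partition; Steps \stepref{ALG2_STEP_2d}--\stepref{ALG2_STEP_2g} always split the affected block into two non-trivial pieces rather than merely peeling off a single letter; and Steps \stepref{ALG2_STEP_2e}--\stepref{ALG2_STEP_2g} additionally install $\{w_j\}$ as a singleton in the \emph{opposite} partition. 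Pairing each win event of a letter of $B$ with either an in-partition strip-off (via Step \stepref{ALG2_STEP_2b}) or an opposite-partition loser-set split (via Steps \stepref{ALG2_STEP_2d}--\stepref{ALG2_STEP_2g}) should control the descendants of the ancestor block inside $\QQQ_0^{(j_1)}$ and $\QQQ_1^{(j_1)}$ and yield the halving.

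Iterating the halving lemma $C$ times on the trivial initial bound $D^{(N)} \leq |\AAA|-1$ from Step \stepref{ALG2_STEP_1} gives $D^{(1)} < 2$ under the hypothesis $2^{C+1}\geq |\AAA|+1$, so every block of $(\QQQ_0^{(1)},\QQQ_1^{(1)})$ is a singleton and Lemma \ref{LEM_ALGO_CORRECT} produces the unique initial pair up to inverses. The main obstacle I anticipate is making the halving lemma quantitatively sharp: a naive per-move accounting gives only an additive decrease of one letter per move, which would yield a bound linear in the sub-path length rather than a factor-of-two improvement per complete sub-path. The crucial trick is to leverage the bilateral effect of each Zorich move---it simultaneously refines both partitions---together with the guarantee that a complete sub-path covers every letter as a winner. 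Carrying out the pigeonhole matching between wins inside $B$ and splitting events in the opposite partition, while handling the cases where $w_j$ lies inside $B$ itself, where some letters of $B$ win more than once, and where $|\LLL_j|=1$, will require the most careful bookkeeping.
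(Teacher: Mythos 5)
Your plan is sound and reaches the right bound, but it tracks a genuinely different quantity than the paper. The paper's proof fixes a row $t$ and measures the \emph{total excess} $u_t(k) := |\AAA| - \big|\QQQ_t^{(N_{k-1}+1)}\big|$ (equivalently $\sum_{Q\in\QQQ_t}(|Q|-1)$), then runs a \emph{global block-count} argument: every letter $a$ that is still not a singleton in row $t$ after the $k^{\mathrm{th}}$ complete subpath must win somewhere in that subpath, and that win must trigger Step \stepref{ALG2_STEP_2d} in row $t$ (any of \stepref{ALG2_STEP_2e}--\stepref{ALG2_STEP_2g} would have installed $\{a\}$ as a singleton, a contradiction), and each such application increases $|\QQQ_t|$ by exactly one. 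Counting these increments and combining with the pigeonhole bound $u_t(k)+1 \leq |\NNN_t(k)|$ gives the recurrence $u_t(k) \leq (u_t(k+1)-1)/2$. You instead track the \emph{maximum block size} $D^{(j)}$ over both rows and argue \emph{locally}: for a fixed block $B$ of $\QQQ_r^{(j_1)}$, each $a\in B$ wins in the subpath, that win must be Step \stepref{ALG2_STEP_2d} in row $r$, so a nonempty loser set $\LLL_{s_a}$ is peeled off; the $\LLL_{s_a}$'s are pairwise disjoint, disjoint from $B$, and contained in $B$'s ancestor $\widehat B$ at step $j_2+1$, hence $|\widehat B| \geq |B| + \sum_a |\LLL_{s_a}| \geq 2|B|$. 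This genuinely differs from the paper for Zorich paths (where a row may contain several non-singleton blocks, so $D_t-1$ can be strictly smaller than $u_t$), though the two collapse to the same thing for Rauzy paths. Both arguments lean on the same two observations about the algorithm and give the same threshold $C\geq\log_2(|\AAA|+1)-1$; the paper's bookkeeping is global and the closing inequality is the $u_t(k)+1\leq|\NNN_t(k)|$ pigeonhole, while yours is per-block and closed by the disjointness of the peeled loser sets.

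Two small fixes. First, the halving lemma must be stated as $D(j_1)\leq \lfloor D(j_2+1)/2\rfloor$ (what the disjointness argument actually proves), not $\lceil D(j_2+1)/2\rceil$: iterating the ceiling version gives only $D^{(1)}\leq \lceil |\AAA|/2^{C}\rceil$, which needs $C\geq\log_2|\AAA|$ to reach $1$, whereas the floor version gives $D^{(1)}\leq \lfloor |\AAA|/2^{C}\rfloor$ and recovers the claimed threshold. Second, your claim that ``Steps \stepref{ALG2_STEP_2d}--\stepref{ALG2_STEP_2g} always split the affected block into two non-trivial pieces'' is not accurate in general; only Step \stepref{ALG2_STEP_2d} under the hypothesis $w_j\in Q_i$ is guaranteed to split into two nonempty parts, which is exactly the case the completeness/non-singleton argument forces, so the halving still goes through, but the statement as written is too broad.
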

	
	\begin{proof}
		Let $n = |\AAA|$.
		Because $|\PAIRS{\AAA}| = 2$ if $n=2$, we assume $n\geq 3$.
		
		We apply Algorithm \ref{ALGO_ZORICH} to
			the winner/loser information
		$$
			(w_1,\LLL_1),(w_2,\LLL_2),\dots,(w_N,\LLL_N),
		$$
		to construct partially ordered pairs
		 $(\QQQ_0^{(j)},\QQQ_1^{(j)})$ for $t\in \{0,1\}$.
		We also define
		$$
			\QQQ_0^{(N+1)} = \QQQ_1^{(N+1)} = \{\AAA\},
		$$
		as is consistent with the discussion before Algorithm \ref{ALGO_ZORICH}.
		 
		We will show that
		\begin{equation}\label{EQ_ZPROOF_GOAL}
			\big|\QQQ_t^{(1)}\big| > n-1 \mbox{ for }t\in \{0,1\},
		\end{equation}
		as these are integer values and so this is equivalent to
			showing that $\QQQ_0^{(1)}$ and $\QQQ_1^{(1)}$
			are both ordered partitions of $\AAA$ into singletons.
		This implies the result of this theorem by Lemma \ref{LEM_ALGO_CORRECT}.
		Let
		$$
			0 =: N_0 < N_1 <\dots < N_{C-1} < N_C := N,
		$$
		be so that for each $k\in \INTi{1,C}$ the Zorich subpath
		defined by the moves $N_{k-1}+1\leq j\leq  N_{k}$
			is complete.
			
		Let
		$$
			u_t(k) := n - \big| \QQQ_t^{(N_{k-1} + 1)}\big| \mbox{ for } k\in \INTi{1,C+1},~t\in \{0,1\},
		$$
		denote the uncertainty in row $t$ once we have applied the algorithm to the $k^{th}$ complete subpath,
			with $u_t(C+1) = n-1$ representing the initial uncertainty.
		We now have that \eqref{EQ_ZPROOF_GOAL} is equivalent to
		\begin{equation}\label{EQ_ZPROOF_GOAL2}
			u_t(1) < 1 \mbox{ for }t\in \{0,1\}.
		\end{equation}
		
		Let
		$$
			\NNN_t(k) := \{a\in \AAA:~\{a\}\not\in \QQQ_t^{(N_{k-1}+1)}\},
		$$
		denote the elements of $\AAA$ that are not singletons in row $t$ once we have applied the algorithm
			to the $k^{th}$ complete subpath.
		In other words, if $a\in \NNN_t(k)$ we do not yet know its position in row $t$.
		
		If for some $t\in \{0,1\}$ and $j\in \INTi{1,N}$ we have that
			$\QQQ_t^{(j)}$ is not a partition of $\AAA$ into singletons,
			or equivalently $\big|\QQQ_t^{(j)}\big| \leq n-1$, then
		$$
			\big|\QQQ_t^{(j)}\big| \geq 1 + \big|\{a\in \AAA:~\{a\}\in \QQQ_t^{(j)}\}\big|
		$$
		as $\QQQ_t^{(j)}$ must contain at least one element that is not a singelton.
		We then have that
			\begin{equation}\label{EQ_ZPROOF_Nt_ut}
				u_t(k) + 1\leq  \big|\NNN_t(k)\big| \mbox{ if } u_t(k) \geq 1,
			\end{equation}
		by using our definitions above.
				
		For any $k\in \INTi{1,C}$ consider $a\in \NNN_r(k)$ for fixed type $r\in \{0,1\}$.
		By the completeness of our subpaths, there exists
			$j\in \INTi{N_{k-1}+1,N_k}$ such that the winner $w_j$
			is $a$.
		Because $\{a\}\not\in \QQQ_r^{(N_{k-1}+1)}$, it must be that
			$\{a\}\not\in \QQQ_r^{(j)}$ as we only refine (and reorder) our partitions in the algorithm construction.
		The following then must be true about the application of Step \stepref{ALG2_STEP_2}:
			the type $t_j$ must be $1-r$ and Step \stepref{ALG2_STEP_2d}
			must have been used, as each of Steps \stepref{ALG2_STEP_2e} -- \stepref{ALG2_STEP_2g}
				result in $\{a\}\in \QQQ_r^{(j)}$.
		In Step \stepref{ALG2_STEP_2d}, we must have that
			$$
				\big|\QQQ_r^{(j+1)}\big| = \big|\QQQ_r^{(j)}\big|+1
			$$
		as exactly one element $Q_i$ of $\QQQ_r^{(j+1)}$ contained $\LLL_j$ and $a$,
		and so its division into $Q_i \setminus \LLL_j$ and $\LLL_j$ produced two nonempty sets
		as $a\in Q_i \setminus \LLL_j$.

		Because this applies to each element of $\NNN_r(k)$ and $r\in \{0,1\}$, we have
			$$
				u_t(k) \leq u_t(k+1) - \big|\NNN_t(k)\big| \mbox{ for }k\in \INTi{1,C},~t\in \{0,1\},
			$$
		and by combining this with \eqref{EQ_ZPROOF_Nt_ut} we have
			\begin{equation}\label{EQ_ZPROOF_Nt_k+1}
				u_t(k) \leq \frac{u_t(k+1)-1}{2} \mbox{ for }k\in \INTi{1,C}, t\in \{0,1\},
			\end{equation}
		which by induction and the definition at $C+1$ gives
			\begin{equation}
				u_t(1) \leq \frac{n}{2^C} - 1 \mbox{ for }t\in\{0,1\}.
			\end{equation}
		We then satisfy \eqref{EQ_ZPROOF_GOAL2} for the integer $u_t(1)$ if
			$
				n \leq 2^{C+1} -1
			$,
		or $C \geq \log_2(n+1) - 1$, as claimed.
	\end{proof}
	
	\begin{coro}\label{COR_ZPERM}
		A $C$-complete Zorich path on $\IRR{n}$, $C \geq \log_2(n+1)-1$, has its initial permutation uniquely determined.
	\end{coro}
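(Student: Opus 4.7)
The plan is to reduce this corollary to Theorem \ref{THM_ZPAIR} via the lifting machinery of Corollary \ref{COR_LIFTING_ZOR}. Fix any alphabet $\AAA$ with $|\AAA| = n$ and any bijection $\tau : \INTi{1,n} \to \AAA$. Apply the construction described in Corollary \ref{COR_LIFTING_ZOR}: using the types $t_i$ and, when $t_i=1$, the values $k_i$ that are all readable from the matrices $\widetilde{A}_i$, define the lifted Zorich path $\widetilde{\Theta}_1,\dots,\widetilde{\Theta}_N$ on $\PAIRS{\AAA}$ by $\widetilde{\Theta}_i = \Psi_{\tau_i} \widetilde{A}_i \Psi^*_{\tau_{i+1}}$ with $\tau_1 = \tau$ and $\tau_{i+1} = \tau_i \circ \sigma_{t_i}^{M_i}$.

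Next I would verify that this lifted Zorich path is still $C$-complete. This follows from Remark \ref{REM_COMPLETE_DEF}: the transformations $\Psi_{\tau_i}$ and $\Psi^*_{\tau_i}$ merely relabel rows and columns of each underlying Rauzy matrix, so a $C$-complete underlying Rauzy path on $\IRR{n}$ lifts to a $C$-complete underlying Rauzy path on $\PAIRS{\AAA}$, and hence the Zorich version inherits $C$-completeness. Since $C \geq \log_2(n+1) - 1 = \log_2(|\AAA|+1) - 1$, Theorem \ref{THM_ZPAIR} applies to the lifted path and yields a unique starting pair up to inverses; denote the set of valid starting pairs by $\{(p_0,p_1),(p_1,p_0)\}$.

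Now I would use the converse statement of Corollary \ref{COR_LIFTING_ZOR} to descend back. If $\pi \in \IRR{n}$ starts the given Zorich path, then both $(\tau, \pi \circ \tau)$ and $(\pi \circ \tau, \tau)$ start the lifted Zorich path described by the $\widetilde{\Theta}_i$'s, so
\[
	\{(\tau, \pi \circ \tau), (\pi \circ \tau, \tau)\} = \{(p_0,p_1),(p_1,p_0)\}.
\]
If $\pi' \in \IRR{n}$ is another starting permutation, the same identity holds with $\pi'$ in place of $\pi$. Comparing first coordinates of these two-element sets: either $(\tau, \pi \circ \tau) = (\tau, \pi'\circ \tau)$, giving $\pi = \pi'$ immediately, or $(\tau, \pi \circ \tau) = (\pi' \circ \tau, \tau)$, in which case $\pi' \circ \tau = \tau$ forces $\pi' = \ID{n}$. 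But the identity permutation is not irreducible (for $n \geq 2$), contradicting $\pi' \in \IRR{n}$. Hence $\pi = \pi'$ and the initial permutation is unique.

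The hard part will be only the bookkeeping: confirming that the lifted path inherits $C$-completeness and that the "inverse ambiguity" on the pair side collapses on the permutation side precisely because identity is not irreducible. The substantive combinatorial work was already carried out in Theorem \ref{THM_ZPAIR}, so this corollary is essentially a transfer of that result through the lifting correspondence of Corollary \ref{COR_LIFTING_ZOR}.
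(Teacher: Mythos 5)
Your proof is correct and follows the same strategy as the paper: lift to $\PAIRS{\AAA}$ via Corollary \ref{COR_LIFTING_ZOR}, check $C$-completeness is preserved, apply Theorem \ref{THM_ZPAIR}, and descend. You spell out the final descent step more explicitly (using the converse direction and ruling out $\pi = \ID{n}$ by irreducibility), which is a nice clarification of the paper's terser closing sentence, but the argument is essentially the same.
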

	
	\begin{proof}
		Given a $C$-complete Zorich path $\widetilde{A}_1,\dots, \widetilde{A}_N$ we may apply Corollary \ref{COR_LIFTING_ZOR}
			to lift to $C$-complete
			Zorich path $\widetilde{\Theta}_1,\dots, \widetilde{\Theta}_N$
			over $\AAA = \INTi{1,n}$ with natural maps $\tau_i$ such that
			$$
				\Theta_i = \Psi_{\tau_i} A_i \Psi^*_{\tau_{i+1}} \mbox{ for } i\in \INTi{1,N}.
			$$
		If for some $i\in \INTi{1,N}$ we have at least one entry with value at least $2$ in $\widetilde{\Theta}_i$,
			we divide according to Section \ref{SSEC_BREAKUP}.
		We may then apply Theorem \ref{THM_ZPAIR} to get the inital pair
			$(p_0,p_1)$ of the Zorich path over $\AAA$ up to inverses.
		By Corollary \ref{COR_LIFTING_ZOR}, exactly one of $\Pi(p_0,p_1)$, $\Pi(p_1,p_0)$
			may start the Zorich path in $\IRR{n}$.
	\end{proof}
	
	\section{Sharpness of Theorem \ref{THM_ZPAIR}}\label{SEC_MAIN2}

In this section, we will address the sharpness of our main result.
We first note that $C_{sharp}= \lceil\log_2(|\AAA|+1) - 1\rceil$ is the minimum $C$ such that
	a $C$-complete Zorich path is guaranteed a unique initial pair
	by Theorem \ref{THM_ZPAIR}.
If $|\AAA| = 3$, then Algorithm \ref{ALGO_ZORICH} will determine our
	initial pair (up to inverses) after any $1$-complete path.
If $|\AAA|\in \INTi{4,7}$,
	a $1$-complete path may not suffice but we are guaranteed a unique
	initial pair given a $2$-complete path.
	
For a given $|\AAA| \geq 8$ we will construct a $C_{flat}$-complete
	Rauzy path such that its initial pair cannot be determined (even up to inverses),
	where $C_{flat} = \lfloor \log_2(|\AAA|) -  1\rfloor$.
The construction of such a path will be the main content of this section and we will prove the following.

\begin{theo}\label{THM_NEED_C}
		For $\AAA$ such that $|\AAA| \geq 8$, there exists
		a Rauzy path on $\AAA$
		that is $\lfloor \log_2(|\AAA|)  - 1 \rfloor$-complete and
	the initial pair cannot be determined.

\end{theo}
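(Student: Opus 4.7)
The strategy is to construct a Rauzy path that saturates, as nearly as possible, the inequality $u_t(k)\leq(u_t(k+1)-1)/2$ that was the engine of the proof of Theorem \ref{THM_ZPAIR}. If after the $C_{flat} = \lfloor \log_2(|\AAA|)-1\rfloor$ complete subpaths of the resulting path we still have $u_t(1)\geq 1$ in some row $t$, then the partially ordered pair $(\QQQ_0^{(1)},\QQQ_1^{(1)})$ produced by Algorithm \ref{ALGO_ZORICH} contains a non-singleton block, and Lemma \ref{LEM_ALGO_CORRECT} then hands us two distinct initial pairs --- indeed two that are not inverses, because a non-singleton block in, say, row $0$ admits at least two internal orderings and the inverse operation only swaps rows.

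I would build the path backwards from its final step. At the outermost index $N$, Step \stepref{ALG2_STEP_1} fixes the partially ordered pair. The recursion then works as follows: given the state $(\QQQ_0^{(N_k+1)},\QQQ_1^{(N_k+1)})$ containing a single non-singleton block of size $m+1$ in one row, I would insert a complete Rauzy subpath whose effect, when read backwards by the algorithm, is to double the non-singleton block to size $2m+2$ while every other letter remains a singleton. The single mechanism available for doing this efficiently is Step \stepref{ALG2_STEP_2d}: every call to that step merely splits one block into two (never pinning down the winner within), whereas Steps \stepref{ALG2_STEP_2e}--\stepref{ALG2_STEP_2g} all insert $\{w_j\}$ as a singleton and are therefore "too informative". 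So the key constraint I must enforce on the subpath being built is that at every move $j$ in it, both $w_j$ and $\LLL_j$ lie inside the same current non-singleton block of $\QQQ_{1-t_j}^{(j+1)}$.

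Translating this constraint into an actual Rauzy path --- i.e., exhibiting a concrete initial pair realized by the prescribed sequence of winners and losers --- is the main obstacle. One must carefully choose the order of wins within each doubling subpath so that (i) each letter of the growing block wins at least once, giving completeness, (ii) the winner and loser sets always share a common ambient block as above, and (iii) the induced sequence of partial orders on $\AAA$ remains consistent with irreducibility throughout. I expect the two constructions announced in the outline as part of Lemma \ref{LEM_CUTTING_UNKNOWNS} to provide precisely this: one gadget that doubles the ambiguity of a single row, and a second that transfers ambiguity between the two rows of the pair so that both rows grow in alternation across successive subpaths.

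Once the doubling gadget is in hand, I would iterate it $C_{flat}$ times. Because $|\AAA|\geq 8 = 2^3$, the chain of doublings fits inside $\AAA$: one checks that starting from uncertainty $1$ and doubling $C_{flat}$ times yields a block whose size is at most $|\AAA|$, so the path can be realized. This produces a $C_{flat}$-complete Rauzy path whose output partially ordered pair still has a non-singleton block, and the proof finishes by picking two inequivalent linear extensions of that block and invoking Lemma \ref{LEM_ALGO_CORRECT} to conclude that both correspond to valid initial pairs for the constructed path. The only book-keeping issue remaining is the values of $|\AAA|$ strictly between consecutive powers of $2$: for those the doubling overshoots and the last gadget must be replaced by a "partial" doubling that only enlarges the block to exactly the right size, which is a straightforward truncation of the same construction.
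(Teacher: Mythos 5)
Your proposal follows the same approach the paper uses: build the path in the order processed by Algorithm~\ref{ALGO_ZORICH} (from step $N$ downward), use Step~\stepref{ALG2_STEP_2d} as the one non-pinning mechanism, splice in complete gadget subpaths from Lemma~\ref{LEM_CUTTING_UNKNOWNS}, and close via Lemma~\ref{LEM_ALGO_CORRECT}. Two points need repair, though. First, the direction of your ``doubling'' is reversed: Algorithm~\ref{ALGO_ZORICH} only \emph{refines} the partitions as $j$ decreases from $N$ to $1$, so each inserted complete subpath must \emph{halve} the non-singleton block read in the algorithm's direction (your $2m+2 \to m+1$ picture runs forward along the path, not backward through the algorithm); stated as written, an inserted subpath that ``doubles'' the block is impossible. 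Second, concluding ambiguity from a non-singleton block is not just a matter of observing that it has two internal orderings that are not row-swaps of each other --- Lemma~\ref{LEM_ALGO_CORRECT} is an equivalence for $(p_0,p_1)\in\PAIRS{\AAA}$, so you must also show both orderings yield \emph{irreducible} pairs. The paper isolates this as Lemma~\ref{LEM_ARE_NOT_DONE}, which shows that in a Form~$X$ partially ordered pair that is not entirely singletons no letter has a definitive position, and that verification requires a genuine case analysis on $|\SSS_0|,|\SSS_1|$ rather than the one-line argument you sketch.
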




Because each loser set $\LLL_j$ is a singleton for a step of Rauzy induction,
	we will instead refer to a Rauzy path by the winner loser pairs
	$$
		(w_1,\ell_1),\dots, (w_N,\ell_N)
	$$
where $\ell_j$ is defined by $\LLL_j = \{\ell_j\}$ for $j\in \INTi{1,N}$.
It follow in this case that
	Steps \stepref{ALG2_STEP_2f} and \stepref{ALG2_STEP_2g} of  Algorithm \ref{ALGO_ZORICH} never occur.
We may verify then by induction that, as we apply the algorithm the partially ordered pairs
	$(\QQQ_0^{(j)},\QQQ_1^{(j)})$ must satisfy
	\begin{equation}\label{EQ_RAUZY_Q_FORM}
		\big|Q_{t,i}^{(j)}\big| \geq 2 \Rightarrow i = 1 \mbox{ for all }i\in \INTi{1,m_t^{(j)}},j\in \INTi{1,N}, t\in \{0,1\},
	\end{equation}
or, in words, the only non-singleton element of each $\QQQ_t^{(j)}$, if one exists, must be the leftmost element.

In our construction, we will create partially ordered pairs that have a very specific form.
We may consider this a partially ordered analogue of \emph{standard pairs},
	meaning pairs $(p_0,p_1)$ that satisfy $p_0(1) = p_1(|\AAA|)$ and $p_1(1) = p_0(|\AAA|)$.
These are equivalent conditions if and only if $|\QQQ_0| = |\QQQ_1| = |\AAA|$, or
	$(\QQQ_0,\QQQ_1)$ actually represents a unique pair $(p_0,p_1)\in \PAIRS{\AAA}$.
	
\begin{defi}\label{DEF_XFORM}
	For partially ordered pair $(\QQQ_0,\QQQ_1)$ on $\AAA$.
	Then $(\QQQ_0,\QQQ_1)$ is \emph{Form $X$} with pivot letters $(a_0,a_1)$ if \eqref{EQ_RAUZY_Q_FORM}
		holds\footnote{Meaning there is at most one non-singleton is each $\QQQ_t$ and it must be the leftmost element if it exists.}
		and for each $t\in \{0,1\}$ the leftmost singleton of $\QQQ_t$ is $\{a_t\}$ which is also the
		rightmost singelton of $\QQQ_{1-t}$.
\end{defi}

We will construct a Rauzy path by applying complete subpaths each starting with an irreducible partially ordered pair
	of Form $X$.
As we proceed to our main construction and proof,
	we want to show that unless our Form $X$ partially ordered pair
	is decomposed into singletons,
	it is not the case that the partially ordered pair
		represents exactly one $(p_0,p_1)\in \PAIRS{\AAA}$.
To so, we make this notion of ambiguity precise in the following definition
	and lemma.

\begin{defi}
	For irreducible partially ordered pair $(\QQQ_0,\QQQ_1)$,
		let
		\begin{equation}
			\SSS_t := \{\alpha\in \AAA:~\{a\}\in \QQQ_t\}
		\end{equation}
		denote the set of singleton letters in $\QQQ_t$.
	We say for fixed $r\in \{0,1\}$ that $\beta \in \AAA\setminus \SSS_r$
		has \emph{definitive} position $i$ in $\QQQ_r$ if for each pair
			$(p_0,p_1)\in \PAIRS{\AAA}$ that agrees with $(\QQQ_0,\QQQ_1)$
		we must have $p_r(\beta) = i$.
\end{defi}

\begin{lemm}\label{LEM_ARE_NOT_DONE}
	For irreducible Form $X$ partially ordered pair $(\QQQ_0,\QQQ_1)$
		and for each $t\in \{0,1\}$,
		no letter in $\AAA\setminus \SSS_t$ has a definitive position in $\QQQ_t$.
\end{lemm}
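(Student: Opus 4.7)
The plan is to reduce to a concrete combinatorial claim, identify exactly which $k$'s can witness reducibility, then produce a second irreducible agreeing pair by a swap in $B$, possibly combined with a swap in $C$. If $Q_{t,1}$ is already a singleton then $\SSS_t = \AAA$ and $\AAA \setminus \SSS_t = \emptyset$, so the statement is vacuous; using the symmetry $t=0 \leftrightarrow t=1$ I would therefore assume $t = 0$, set $B := Q_{0,1}$ with $b := |B| \ge 2$, and write $C := Q_{1,1}$, $c := |C|$. Because the positions of every singleton of $\QQQ_0$ in $p_0$ and of every singleton of $\QQQ_1$ in $p_1$ are forced by $(\QQQ_0,\QQQ_1)$, an agreeing pair is uniquely parametrised by the two bijections $\sigma := p_0|_B : B \to \INTi{1,b}$ and $\tau := p_1|_C : C \to \INTi{1,c}$, and the statement becomes that $(\sigma,\tau) \mapsto \sigma(\beta)$ is non-constant on the set of irreducible agreeing pairs.

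Next I would pin down the range of problematic $k$. Because $a_0$ sits at position $b+1$ in $p_0$ and at position $n$ in $p_1$, for every $k$ with $b < k < n$ the element $a_0$ lies in $p_0^{-1}(\INTi{1,k}) \setminus p_1^{-1}(\INTi{1,k})$, so irreducibility is automatic there. For $k \le \min(b,c)$ a coincidence $p_0^{-1}(\INTi{1,k}) = p_1^{-1}(\INTi{1,k})$ forces this set into $B \cap C$, so $k \le |B \cap C|$. Thus only the first $|B \cap C|$ entries of $\sigma$ and of $\tau$ matter for irreducibility.

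Now let $(\sigma^\star,\tau^\star)$ be an irreducible witness (which exists by irreducibility of $(\QQQ_0,\QQQ_1)$) and set $i^\star := \sigma^\star(\beta)$. For each $\gamma \in B \setminus \{\beta\}$ let $\sigma_\gamma$ be obtained from $\sigma^\star$ by swapping the values on $\beta$ and $\gamma$; then $(\sigma_\gamma,\tau^\star)$ agrees with $(\QQQ_0,\QQQ_1)$ and places $\beta$ at position $\sigma^\star(\gamma) \neq i^\star$. It fails irreducibility only at some $k$ strictly between $\sigma^\star(\beta)$ and $\sigma^\star(\gamma)$ for which $\sigma^{\star\,-1}(\INTi{1,k}) \triangle \tau^{\star\,-1}(\INTi{1,k}) = \{\beta,\gamma\}$; each such obstructing $k$ lies in $\INTi{1,|B\cap C|+1}$ and uniquely specifies $\gamma \in B \cap C$. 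Consequently, when $B \setminus (C \cup \{\beta\}) \ne \emptyset$, choosing $\gamma$ in that set yields an unobstructed swap and we are done.

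In the remaining subcase $B \setminus \{\beta\} \subseteq C$, I would additionally modify $\tau^\star$. The key lever is $B \neq C$, which follows from the irreducibility of $(\QQQ_0,\QQQ_1)$, since $B = C$ would make $p_0^{-1}(\INTi{1,b}) = B = p_1^{-1}(\INTi{1,b})$ in every agreement. If $C \setminus B \neq \emptyset$, I would slide an element of $C \setminus B$ to a small position of $\tau^\star$ in order to enlarge every previously offending symmetric difference past size two; if instead $C = B \setminus \{\beta\}$ (so $c = b-1$), I would swap two elements inside $C$ to displace the specific coincidence at the relevant obstructing $k$. The main obstacle is this last verification: ensuring that the combined modification simultaneously breaks every obstructing size-$2$ symmetric difference without creating new coincidences at other $k$, which rests essentially on the Form $X$ exclusion $a_0,a_1 \notin B \cup C$ and on $B \neq C$.
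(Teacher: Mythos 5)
Your reduction to the two blocks $B=Q_{0,1}$, $C=Q_{1,1}$, the localization of possible coincidence levels, and the swap argument when $B\setminus(C\cup\{\beta\})\neq\emptyset$ are sound, and in spirit this parallels the paper's proof, which also builds explicit agreeing irreducible pairs case by case according to how the two blocks overlap. Two small repairs are needed along the way: coincidences at levels $k$ with $c<k\le b$ are not covered by your two observations and must be excluded using the other pivot $a_1$ (which sits at position $c+1$ in row $1$ and position $n$ in row $0$), and the exclusion ``$a_1\notin C$'' that you invoke fails exactly when $C$ is a singleton, since Form $X$ then forces $C=\{a_1\}$ (harmless, but it should be noted). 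The genuine gap, however, is the case you yourself flag as the main obstacle: when $B\setminus\{\beta\}\subseteq C$ you only describe a plan (slide an element of $C\setminus B$ forward; swap two elements inside $C$) and explicitly leave unverified that the combined modification destroys every obstructing level without creating coincidences elsewhere. That verification is the entire content of this case (it corresponds to the last subcase $\SSS_0=\SSS_1\cup\{b_0\}$ in the paper's proof), and as stated your mechanism does not obviously work: a swap of two elements of $C$ does not move $\beta$ at all, so it must interact with the $B$-swap, and altering $\tau$ changes the row-$1$ prefixes over a whole interval of levels, which is precisely what you have not controlled.

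The gap is fillable, and more cheaply than your plan suggests, which is why leaving it open is the one real defect. If $C\setminus B\neq\emptyset$, place some $d\in C\setminus B$ at position $1$ of row $1$: any coincidence level $k$ satisfies $k\le\min(b,c)$ and would force the row-$0$ prefix (a subset of $B$) to equal the row-$1$ prefix, which contains $d\notin B$; hence every $\sigma$ is admissible and $\beta$ gets at least two positions. If $C=B\setminus\{\beta\}$, then $c\ge2$ (otherwise $C=\{a_1\}\not\subseteq B$), and you can take one agreeing pair with $\sigma(\beta)=1$, where every relevant row-$0$ prefix contains $\beta\notin C$ so no coincidence occurs, and another with $\sigma(\beta)=2$ and the leading letters of the two blocks chosen to be distinct elements of $C$; the only level not blocked by $\beta$ is $k=1$, which that choice handles. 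With these constructions written out, your argument closes and is essentially the same style as the paper's; without them, the proof of the lemma is incomplete in exactly the hardest configuration.
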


\begin{proof}
	Let $n = |\AAA|$ and $m_t = \big|\QQQ_t\big|$ for $t\in \{0,1\}$.
	We may without loss of generality assume
		$$
			m_0 \leq m_1 \leq n,
		$$
	and will argue by cases on $m_0,m_1$.
	Note that either $m_t = n$ or
		$$
			m_t = 1 + \big|\SSS_t\big| \mbox{ if }m_t<n.
		$$
	Recall the pivot letters $a_0,a_1$ from Definition \ref{DEF_XFORM}.
	
	First if $m_0 = n$, then the statement is trivially true as $\SSS_0 = \SSS_1 = \AAA$.
	Now, assume $m_1 = n$ but $m_0 \leq n-1$.
	If we choose any pair $(p_0,p_1)$ that agrees with $(\QQQ_0,\QQQ_1)$ then it must
		be irreducible as $p_0(a_0) = 1$ and  $p_1(a_0) = n$.
	Because $p_1$ may order the elements of $\AAA\setminus \SSS_1$
		with complete freedom in positions $1$ to $n - \big|\SSS_1\big| \geq 2$,
		no letter in $\AAA\setminus \SSS_1$ has a definitive position in $\QQQ_1$.
		
	If $m_1 \leq n-1$, then because $(\QQQ_0,\QQQ_1)$ is irreducible there exists
		$b_0 \in \SSS_0 \setminus \SSS_1$.
	(Note that $\big|\AAA\setminus \SSS_t\big| \geq 2$ for each $t\in \{0,1\}$.)
	We note that any pair $(p_0,p_1)$ that agrees with $(\QQQ_0,\QQQ_1)$
		and satisfies $p_1(b_0) = 1$ is irreducible,
		and so no letter in $\AAA\setminus \SSS_0$ has a definitive position
		in $\QQQ_0$.

	We now want to show that no letter in $\AAA\setminus \SSS_1$ has a definitive position
		in $\QQQ_1$.		
	If there exists $b_1\in \SSS_1\setminus \SSS_0$ then by a similar argument we
		must have that no letter in $\AAA\setminus \SSS_1$ has a definitive position
		in $\QQQ_1$.
	If instead $\SSS_1 \subsetneq \SSS_0$ but there exists $b_0'\neq b_0$ in $\SSS_0\setminus \SSS_1$ then we may also conclude
		that no letter in $\AAA\setminus \SSS_1$ has definitive position in $\QQQ_1$
		as a pair $(p_0,p_1)$ satisfying either $p_1(b_0) = 1$ or $p_1(b_0') = 1$ is irreducible.
	Finally, if $\SSS_0 = \SSS_1 \cup \{b_0\}$ then consider any ordering of the letters
		in $\AAA\setminus \SSS_1$ such that $b_0$ is not the rightmost in this set
		and let $p_1$ order $\AAA\setminus \SSS_1$ in this way and also agree with $\QQQ_1$.
	We may verify that if $p_0$ agrees with $\QQQ_0$ and $p_0(b_1) = 1$ where $b_1 \neq b_0$ is the
		rightmost element according to $p_1$, then $(p_0,p_1)$ is irreducible.
	Because $\big|\AAA\setminus \SSS_1\big| \geq 3$ in this case,
		letter in $\AAA\setminus \SSS_1$ has definitive position in $\QQQ_1$.
		
	By exhausting all cases, we have concluded that no Form $X$ irreducible
		partially ordered pair has such a letter in definitive position, as claimed.
\end{proof}

Our iterative path constructions will be provided implicitly within the following proof.
In it, we describe how to create Rauzy paths starting and ending at Form $X$
	partially ordered pairs that visit only specific letters
	while minimizing the addition of information.

\begin{lemm}\label{LEM_CUTTING_UNKNOWNS}
	For irreducible partially ordered pair $(\QQQ_0',\QQQ_1')$ on $\AAA$ of Form $X$,
		let $\SSS_t'$ denote the set of singletons in $\QQQ_t'$.
	If $\SSS_0' \cup \SSS_1' = \AAA$ then:
	\begin{enumerate}
		\item there exists a Rauzy path beginning and ending at $(\QQQ'_0,\QQQ'_1)$
			such that each letter in $\SSS'_0 \cap \SSS'_1$ wins and
				no other letters win, and
		\item For each $r\in \{0,1\}$,
			there exists a Rauzy path ending at $(\QQQ_0',\QQQ_1')$
			and starting at a partially ordered pair $(\QQQ_0,\QQQ_1)$
			of Form $X$ such that: each letter in $\SSS'_r \setminus \SSS'_{1-r}$ wins at least once,
				$\SSS_{1-r}\setminus \SSS_r = \SSS_{1-r}' \setminus \SSS'_r$
				and
					$$\big|\SSS_r \setminus \SSS_{1-r}\big| = \begin{cases}
						\left\lfloor \big|\SSS_r' \setminus \SSS_{1-r}'\big|/2\right\rfloor, & \mbox{if }\big|\SSS_r' \setminus \SSS_{1-r}'\big| \geq 4\\
						0, & \mbox{otherwise.}
						\end{cases}
				$$
	\end{enumerate}
\end{lemm}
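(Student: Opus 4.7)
The plan is to build both Rauzy paths explicitly as sequences of winner/loser pairs and verify their effect by tracing Algorithm \ref{ALGO_ZORICH} backward. The rigidity of Form $X$ pairs is essential: the non-singleton block is leftmost in each row, the pivots $a_0, a_1$ occupy fixed positions (leftmost singleton of their own row and rightmost singleton of the other), and the hypothesis $\SSS_0' \cup \SSS_1' = \AAA$ makes $\SSS_r \setminus \SSS_{1-r}$ coincide with the letters of the non-singleton block of $\QQQ_{1-r}$. Because losers are singletons in the Rauzy (as opposed to Zorich) setting, only Steps \stepref{ALG2_STEP_2b}--\stepref{ALG2_STEP_2e} of the algorithm can occur in the backward trace.

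For Part (1), I will construct, for each $\alpha \in \SSS_0' \cap \SSS_1'$, a short Rauzy sub-cycle of the underlying concrete pair in which $\alpha$ wins at least once, then concatenate these over all $\alpha$. The pivots are handled immediately by a type-$0$ move followed by a type-$1$ move that returns the pair to itself. For a non-pivot $\alpha$, a preparatory sequence of moves shifts $\alpha$ to the rightmost position of one row (by having the current rightmost singletons lose), then $\alpha$ wins, and a mirror sequence restores the original pair. The verification reduces to checking that each sub-cycle's backward algorithmic processing introduces no refinement beyond $(\QQQ_0', \QQQ_1')$: at every step $w_j$ is already a singleton in $\QQQ_{t_j}^{(j+1)}$ (so Step \stepref{ALG2_STEP_2b} is trivial) and $\ell_j$ is a singleton already in the claimed rightmost position of $\QQQ_{1-t_j}^{(j+1)}$ (so Step \stepref{ALG2_STEP_2e} acts trivially).

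For Part (2), fix $r \in \{0,1\}$ and enumerate $\SSS_r' \setminus \SSS_{1-r}' = \{\beta_1, \ldots, \beta_K\}$, which by the hypothesis and Form $X$ constitute the non-singleton block of $\QQQ_{1-r}'$. I prepend to any realization of the path ending at $(\QQQ_0', \QQQ_1')$ a sub-path designed so that the backward algorithm invokes Step \stepref{ALG2_STEP_2d} to detach block letters one at a time. Specifically, pair consecutive $\beta_{2i-1}, \beta_{2i}$, and for each pair use a short sub-piece with $\ell_j = \beta_{2i}$ and $w_j = \beta_{2i-1}$, so that Step \stepref{ALG2_STEP_2d} splits $\beta_{2i}$ off from the block as a new singleton of $\QQQ_{1-r}$ while Step \stepref{ALG2_STEP_2b} records $\beta_{2i-1}$'s required win. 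After $\lceil K/2 \rceil$ such pairs, the block has shrunk from $K$ to $\lfloor K/2 \rfloor$ letters, with the detached letters joining $\SSS_r \cap \SSS_{1-r}$. The degenerate cases $K < 4$ use an analogous construction that detaches every block letter, employing pivots as $w_j$ where needed to meet the winning requirement.

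The main obstacle is preserving Form $X$ at every intermediate backward state. When Step \stepref{ALG2_STEP_2d} places $\{\ell_j\}$ at the rightmost position of $\QQQ_{1-r}$, this new singleton must assume the role of the pivot $a_{t_j}$ in the updated pair, and the pivot positions in both rows must migrate consistently. I plan to manage this by alternating the types $t_j$ across the sub-path and choosing winners so that the rightmost singleton of each row tracks the evolving pivot correctly, verifying by induction on sub-path length that Form $X$ and the invariant $\SSS_{1-r} \setminus \SSS_r = \SSS_{1-r}' \setminus \SSS_r'$ both persist. Because losers are singletons and always land at the right end (and because we only apply Step \stepref{ALG2_STEP_2b} with $w_j$ already a singleton of $\QQQ_r$), this reduces to a finite case analysis on how the pivots shift through each two-move sub-piece rather than an essential difficulty.
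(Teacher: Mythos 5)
Your overall skeleton matches the paper's proof (explicit winner/loser segments traced backward through Algorithm \ref{ALGO_ZORICH}, pairing up the letters of $\SSS_r'\setminus\SSS_{1-r}'$ so that each pair detaches exactly one letter via Step \stepref{ALG2_STEP_2d}, with Form $X$ as the invariant), but as written Part (2) has genuine gaps against the lemma's own requirements. First, the lemma demands that \emph{every} letter of $\SSS_r'\setminus\SSS_{1-r}'$ win at least once, yet in your sub-pieces only $\beta_{2i-1}$ wins; $\beta_{2i}$ only loses, and nothing in your preparatory moves gives it a win. The paper handles this by making $b_2$ the winner of an entire subpath of type-$(1-r)$ moves, which simultaneously solves a second problem you leave unaddressed: for the key move $(w,\ell)=(\beta_{2i-1},\beta_{2i})$ to occur in an actual Rauzy path, $\beta_{2i}$ must at that moment be the \emph{rightmost} letter of row $1-r$, whereas it lives in the leftmost (block) element of $\QQQ_{1-r}'$; maneuvering it there and back is exactly what the paper's surrounding subpaths do, and ``alternating types and a finite case analysis'' does not substitute for that construction. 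Second, your count is off in the odd case: $K$ letters admit only $\lfloor K/2\rfloor$ disjoint pairs (not $\lceil K/2\rceil$), each detaching one letter, so pairing alone leaves $\lceil K/2\rceil$ block letters rather than the required $\lfloor K/2\rfloor$; the paper needs, and supplies, a separate two-part construction in which the leftover letter wins a full cycle and becomes a singleton in both rows.

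Part (1) as described would also fail. A type-$0$ move followed by a type-$1$ move does not return a Form $X$ pair to itself in general: already for $p_0=a\,b\,c\,d$, $p_1=d\,c\,b\,a$ the second move's winner is $b$, which is not a pivot, and the resulting pair is $(a\,b\,d\,c,\,d\,a\,c\,b)$. More fundamentally, Rauzy moves cannot be undone by a ``mirror sequence''; the only way to return to the same pair is to complete cycles. The paper's path $0^{n-j_1}1^{h_1-h_2}0^{n-j_2}\cdots 1^{h_{m-1}-h_m}$ is built around this: each common singleton $b_i$ wins a \emph{full} type-$0$ cycle (restoring row $1$ each time), while the interleaved type-$1$ moves are all won by the pivot $b_m$ and accumulate to a single full cycle only at the very end (restoring row $0$), so only letters of $\SSS_0'\cap\SSS_1'$ ever win and the backward trace returns exactly to $(\QQQ_0',\QQQ_1')$. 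Your per-letter ``shift, win once, mirror back'' scheme does not close up, so the verification you sketch does not go through as stated.
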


\begin{proof}
	For the first item,
	let $b_1,\dots,b_m$ be the elements of $\SSS_0'\cap \SSS_1'$ ordered in reverse with respect to $\QQQ_0'$,
		meaning $\{b_i\}$ is to the right of $\{b_{i+1}\}$ for each $i\in \INTi{1,m-1}$.
	Note that $b_1$ is the pivot letter $a_1$ and $b_m$ is the pivot letter $a_0$.
	
	For each $i\in \INTi{1,m}$ let $h_i$  and $j_i$ be the positions
		of $\{b_i\}$ according to $\QQQ_0'$ and $\QQQ_1'$ respectively.
	Because $(\QQQ_0',\QQQ_1')$ is Form $X$,
	$$
		h_m = n - |\SSS_0| +1,~h_1 = n,~j_1 = n - |\SSS_1| + 1,\mbox{ and }
			j_m = n,
	$$
	 and because $(\QQQ_0',\QQQ_1')$ is also irreducible $m\geq 2$.
	The Rauzy path is described in order by type from $(\QQQ_0',\QQQ_1')$ as
		$$
			0^{n - j_1} 1^{h_1 - h_2} 0^{n- j_2}1^{h_2 - h_3} 0^{n- j_3} \dots 0^{n - j_{m-1}}1^{h_{m-1} - h_m},
		$$
	meaning we first apply $n-j_1$ moves of type $0$, followed by $h_1-h_2$ moves of type $1$, ets.
	We may verify that:
		\begin{enumerate}
			\item each move of type $1$ has $b_m$ as the winner and belongs to a cycle,
			\item the moves in the $0^{n-j_i}$ cycle has $b_i$ as the winner and
			\item this path returns to $(\QQQ_0',\QQQ_1')$.
		\end{enumerate}
		
	For the second item, consider the pivot letters $a_0',a_1'$ for Form $X$ $(\QQQ_0',\QQQ'_1)$.
	Note that $a_0',a_1'\in \SSS_0\cap\SSS_1$ and $a_{1-t}'$ is the rightmost element of $\QQQ'_t$
		for $t\in \{0,1\}$.
	To prove our claim, we will instead show to how to take two distinct elements $b_1,b_2$ of
		$\SSS_r'\setminus \SSS_{1-r}'$, where $b_1$ appears to the left of $b_2$ in $\QQQ_r$,
			and create a Rauzy path such that $b_1$ and $b_2$ both win and the path results in
				partially ordered pair $(\QQQ_0,\QQQ_1)$
			of Form $X$ (with the same pivot points)
			such that $\SSS_r\setminus \SSS_{1-r} = \SSS'_r \setminus \big(\SSS'_{1-r}\cup \{b_2\}\big)$,
				$\SSS_0\cap \SSS_1 = \SSS'_0\cap \SSS'_1$ and $\SSS_{1-r}\setminus \SSS_r = \SSS'_{1-r}\setminus \SSS'_{1-r}$.
	We assume that $\big|\SSS'_r \setminus \SSS'_{1-r}\big| \geq 3$ so that we avoid the impossible condition
		$\big|\SSS_r\setminus \SSS_{1-r}\big| = 1$.
	We may then concatenate such paths after pairing up the elements of $\SSS_r\setminus \SSS_{1-r}$ to
		achieve our result.
		
	If $\big|\SSS_r\setminus \SSS_{1-r}\big|$ is odd,
		we may apply a path that has two parts, the latter consists of moves of type $1-r$
			so that our remaining unpaired letter $b_3 \in \SSS_r\setminus \SSS_{1-r}$
			starts the second half as the rightmost letter in row $r$
		and the former consisting of a cycle of $1-r$ moves (with $b_3$ as the winner)
		with $a_{1-r}$ as the rightmost letter in row $r$ both before and after this cycle.
	The partially ordered pair that begins this path will be Form $X$, with pivot letters $b_3$ for row $1-r$ and $a'_r$ for row $r$.
	(In particular, $b_3$ is a singleton in each row of the starting partially ordered pair.)
	
	We now create the path that acts on $b_1$ and $b_2$ as described above.
	We create a path from four subpaths, and we describe them in order from fourth to first.
	The fourth subpath consists of type $1-r$ moves so that $b_1$ is the rightmost letter
		of row $r$ according to the partially ordered pair that starts this fourth subpath.
	So at the start of this subpath, the letters $a_r$, $b_2$, $a_{1-r}$, $b_1$ appear in
		relative position in row $r$ from left to right.
	(Other letters may appear in between.)
	The third subpath consists of one move,
		type $r$ with $b_1$ as the winner and loser $b_2$.
	The partially ordered pair that starts this subpath has $b_1$ as its rightmost letter in row $r$,
		$b_2$ as its rightmost letter in row $1-r$ and has $b_2$ as a singleton in each row.
	For the second subpath, we apply moves of type $r$ so that $a_{1-r}$ is the rightmost letter
		in the partially ordered pair starting this subpath.
	We note that $b_2$ is the winner of these moves.	
	We then create our first subpath buy applying moves of type $r$ so that $a_r$ is the rightmost letter of
		row $1-r$ in the starting partially ordered pair $(\QQQ_0,\QQQ_1)$.
	Note that the only move that resulted in Step \stepref{ALG2_STEP_2e} was the move with winner $b_1$ and loser $b_2$.
	All other moves resulted in Step \stepref{ALG2_STEP_2d} so that no partition elements were divided.
	It follows that $\SSS_r = \SSS_r'$ and $\SSS_{1-r} = \SSS_{1-r} \cup \{b_2\}$ as claimed.
	
	When we iterate this construction on all paired elements, we are guaranteed to always have at least three
		singletons in row $r$ that are not in row $1-r$ as long as $|\SSS_r'\setminus \SSS_{1-r}'|\geq 4$,
		which we assume in the statement of the lemma.
\end{proof}

\begin{proof}[Proof of Theorem \ref{THM_NEED_C}]
	Without loss of generality, assume $\AAA = \INTi{1,n}$.
	We will build the sequence of reverse Rauzy moves in the order encountered by Algorithm \ref{ALGO_ZORICH},
		meaning we will list segments of the form
		$$
			(w_{M_2},\ell_{M_2}),~(w_{M_2-1},\ell_{M_2-1}),~(w_{M_2 - 2},\ell_{M_2-2}),~\dots,~(w_{M_1},\ell_{M_1}),
		$$
		where $(w_j,\ell_j)$ is the winner/pair for step $j$ and $M_1 < M_2$.
	
	The first segment we include will be the winner/loser pairs
		\begin{equation}\label{EQ_SHARP_PROOF_PATH_C2}
			(1,2),~(2,3),~(3,4),~\dots,~ (n-1,n)
		\end{equation}
	By following these moves in Algorithm \ref{ALGO_ZORICH}, if $n$ is even we are at partially ordered pair
		$$
			\mtrx{ \{2,4,\dots, n\} & \{1\} & \{3\} & \dots & \{n-1\}\\
				\{1,3,\dots,n-1\} & \{2\} & \{4\} & \dots & \{n\}}
		$$
	where the singleton set for Row $0$ is the odd elements $\{1,3,\dots, n-1\}$
	and the singleton set for Row $1$ is the even elements $\{2,4,\dots, n\}$.
	In this case we then apply the segment of winner/loser pairs
		\begin{equation}\label{EQ_SHARP_PROOF_PATH_C1}
			(n,n-2),~(n,n-4),~\dots ,~ (n,4),~(n,2),
		\end{equation}
	we have a complete subpath and are now at
		$$
			(\QQQ'_0,\QQQ'_1) = \left(\begin{array}{c c c c c c c c c}
				\{n\} & \{1\} & \{3\} & \dots & \{n-1\} & \{n-2\} & \{n-4\} & \dots & \{2\}\\
				\multicolumn{4}{c}{\{1,3,\dots,n-1\}} & \{2\} & \{4\} & \dots & \dots & \{n\}
			\end{array}\right).
		$$
	For this partially ordered pair, the singleton set for Row $0$ is now $\SSS_0'=\AAA$ and
		the singleton set for Row $1$ is (still) $\SSS_1' = \{2,4,\dots, n\}$.

	If instead $n$ is odd we are at partially ordered pair
		$$
			\mtrx{ \{2,4,\dots, n-1\} & \{1\} & \{3\} & \dots & \{n-2\} & \{n\}\\
				\multicolumn{2}{c}{\{1,3,\dots,n-2,n\}} & \{2\} & \{4\} & \dots & \{n-1\}}
		$$
	after applying the winner/loser pairs \eqref{EQ_SHARP_PROOF_PATH_C2}.
	The singleton sets are again the odds for Row $0$ and the evens for Row $1$.
	We then apply the winner/loser pairs
		\begin{equation}\label{EQ_SHARP_PROOF_PATH_C1p}
			(n,n-2),~(n,n-4),~\dots ,~ (n,3),~(n,1),
		\end{equation}
	to have a complete subpath and are at partially ordered pair
		$$
			(\QQQ_0',\QQQ_1') = \mtrx{ \multicolumn{4}{c}{\{2,4,\dots, n-1\} } & \{1\} & \{3\} & \{5\} & \dots & \{n\}\\
				\{n\} & \{2\} & \{4\} & \dots & \{n-1\} & \{n-2\} & \{n-4\} & \dots  & \{1\}}			
		$$
	with singleton sets $\SSS_0'$ the odds and $\SSS_1' = \AAA$.
	
	In either of the above cases, we have described a complete subpath
		with initial partially ordered pair $(\QQQ_0',\QQQ_1')$ that satisfies \eqref{EQ_RAUZY_Q_FORM}
		as well as the Form $X$ conditions from Definition \ref{DEF_XFORM}.
	Furthermore, the singleton sets for $(\QQQ_0',\QQQ_1')$ satisfy
		\begin{equation}
			\min\{|\SSS_0'|,|\SSS_1'|\} = \left\lceil \frac{n}{2}\right\rceil
				\Rightarrow n - \min\{|\SSS_0'|,|\SSS_1'|\} = \left\lfloor \frac{n}{2}\right\rfloor
		\end{equation}
		
	We now describe the creation of the full $C$-complete path by discussing the $C-1$ remaining  complete subpaths.
	We refer to the row $r$ such that $\SSS'_r \neq \AAA$ as the \emph{unknown row}.
	We apply moves that fulfill the conditions of the first result in Lemma \ref{LEM_CUTTING_UNKNOWNS}
		to ensure the singletons common to each row win at least once and apply moves as given in the second part to
		allow the other letters to win.
	As stated in that lemma, this will result in a new initial partially ordered pair that also satisfies \eqref{EQ_RAUZY_Q_FORM}
		as well as the Form $X$ conditions from Definition \ref{DEF_XFORM}.
	Furthermore, as long as there were at least $4$ letters missing from the singleton set in the unknown row,
		then the new unknown row will have half of the unknown letters from the previous iteration.
		
	If we call $(\QQQ_0,\QQQ_1)$ the partially ordered pair obtained after running this $C$-complete path through Algorithm \ref{ALGO_ZORICH},
		then for its singleton sets $\SSS_0$, $\SSS_1$ we have
			$$
				n - \min\{|\SSS_0|, |\SSS_1|\} = \left\lfloor \frac{n - \min\{|\SSS_0'|,|\SSS_1'|\}}{2^{C-1}}\right \rfloor
					= \left\lfloor \frac{n}{2^C}\right\rfloor
			$$
	and in order to ensure $(\QQQ_0,\QQQ_1)$ is not completely determined, we require then that
		$$
			n - \min\{|\SSS_0|,|\SSS_1|\} \geq 2
		$$
	or $C \leq \log_2(n) - 1$.
	By Lemma \ref{LEM_ARE_NOT_DONE} there are multiple initial pairs, even up to inverses, and
		so by Lemma \ref{LEM_ALGO_CORRECT} all of these are initial pairs to this Rauzy path.
\end{proof}

\section{Conclusions}

In this paper, we provided an effective proof to the uniqueness result from \cite{cVeechBUF}
	and also extended to include the more general Zorich type of induction.
It would be interesting to develop an effective result and algorithm in the more general setting 
	of matrix products as addressed in \cite{cFick}.
	
Furthermore, Rauzy induction is defined for linear involutions that are similar to IETs in some respects
	but are combinatorially more involved.
We refer to \cite{cBoissyLanneau} for more a comprehensive discussion on this induction.
Is there an analogous effective method in this setting?

\appendix

\renewcommand{\theequation}{\textbf{\color{blue}\Alph{section}.\arabic{equation}}}

\section{Algorithm for Permutations}

The proof of Corollary \ref{COR_ZPERM} uses Algorithm \ref{ALGO_ZORICH} on pairs and the lifting results given in Lemma \ref{LEM_LIFTING_UP} and Corollary \ref{COR_LIFTING_ZOR}.
In this section, we translate Algorithm \ref{ALGO_ZORICH} for direct use on permutations.

As in our main algorithm section, we let $\QQQ = (Q_1,\dots, Q_m)$
	be a partial ordering on $\INTi{1,n}$, meaning
		$\{Q_i:~i\in \INTi{1,m}\}$ is a partition of $\INTi{1,n}$.
Irreducible permutation $\pi \in \IRR{n}$ agrees with partial ordering $\QQQ$
	if for each $j,j'\in \INTi{1,m}$ and $i,i'\in \INTi{1,n}$ such that
		$i \in Q_j$ and $i'\in Q_{j'}$ we have $\pi(i) < \pi(i')$.
We also reuse the $\star$ notation from \eqref{EQ_STAR}, specifically if  $\QQQ = (Q_1,\dots, Q_m)$
	is a tuple of sets, then $\star \QQQ$ is the tuple after removing any entries that are the empty set.
	
	The Zorich induction matrices $\tilde{A}$ tell us the type $t$.
	When the matrix is type $0$ the non-zero entries in row $n$ tell us the losers
		with multiplicity, and we break these up as discussed in Section \ref{SSEC_BREAKUP}
			to ensure the maximum entry is $1$ and we get the loser set
			$$
				\LLL = \{i \in \INTi{1,n-1}:~A_{n,i} = 1\}.
			$$
	When the matrix is type $1$, we know the value $k =\pi^{-1}(n)$ and the number of moves $p$ of this type in a row,
		which is also the power of the matrix \eqref{EQ_A1_PERMS} that forms this matrix product.
	
	Algorithm \ref{ALGO_ZORICH_PERM} follows Algorithm \ref{ALGO_ZORICH}
		while applying the appropriate relabeling as discussed in Section \ref{SSEC_PERM+PAIRS}.
	Because this relabeling always treats a permutation $\pi \in \IRR{n}$
		as a pair $(\ID{n},\pi) \in \PAIRS{\INTi{1,n}}$, we expect to follow Algorithm \ref{ALGO_ZORICH} when
			the move is type $0$.
			
	When the move is type $1$ we do not gain any information about Row $0$ but instead must only relabel the known
		letters within the partial ordering for $\pi$.
	To do this, we define a map $\delta_k:\INTi{1,n} \to \INTi{1,n}$ by
		$$
			\delta_k(j) = \begin{cases}
				j, & j \leq k\\
				n, & j = k+1,\\
				j-1, & j \geq k+2,
			\end{cases}
		$$
	and extend this map to set $S \subseteq \INTi{1,n}$ by
		$$
			\delta_k(S) = \{\delta_k(s):~s\in S\}.
		$$
		
\subsection{The Permutation Algorithm}

Given the discussion in the previous section, we now present the translated algorithm.
	
	\begin{algo}\label{ALGO_ZORICH_PERM}
	Assume $n \geq 3$ and consider matrices
		$$
			\tilde{A}_1 ,\dots \tilde{A}_N
		$$
	of a Zorich path of length $N\geq 1$ on $\IRR{n}$.
	\begin{enumerate}
		\renewcommand{\theenumi}{\texttt{\arabic{enumi}}}
		\renewcommand{\labelenumi}{\texttt{\color{purple}$\langle$\theenumi$\rangle$}}
		
		\item Let $t_N$ be the type of matrix $\tilde{A}_N$.
		
			\begin{enumerate}
				\renewcommand{\theenumii}{\texttt{.\arabic{enumii}}}
				\renewcommand{\labelenumii}{\texttt{\color{purple}$\langle$\theenumi\theenumii$\rangle$}}
				
				\item\label{ALG3_STEP_1a} If $t_N = 0$ then for given set $\LLL_N$,
					$$
						\QQQ^{(N)}:= \big(\INTi{1,n} \setminus \LLL_N, \LLL_n\big)
					$$
					
				\item\label{ALG3_STEP_1b} If $t_N = 1$ then for value $k_N$,
					$$
						\QQQ^{(N)} := \big(\INTi{1,n} \setminus \{k_N\}, \{k_N\}\big)
					$$
			
			\end{enumerate}
			
		\item Starting with $j = N-1$ and iterating down to $j=1$:
			\begin{enumerate}
				\renewcommand{\theenumii}{\texttt{.\arabic{enumii}}}
				\renewcommand{\labelenumii}{\texttt{\color{purple}$\langle$\theenumi\theenumii$\rangle$}}
				
				\setcounter{enumii}{-1}
				
				\item Consider previously created partial ordering, using notation
					$$\QQQ^{(j+1)} = \big(Q_1, \dots, Q_m\big).$$

				\item\label{ALG3_STEP_2a} If the type is $t_j = 1$ with additional data $k_j$ and power $p_j$, then
					$$
						\QQQ^{(j)}: = \big(\delta^{p_j}_{k_j}(Q_1),\dots, \delta^{p_j}_{k_j}(Q_{m})\big)
					$$
					and consider the next $j$.
					
				\item If the type is $t_j = 0$ let $\LLL = \LLL_j$ denote the associated set and continue to the following.

						\item\label{ALG3_STEP_2b-i} If for some $i \in \INTi{1,m}$ we have $\LLL \subseteq Q_i$ and $n \in Q_i$, then
							$$
								\QQQ^{(j)} := \star\big(Q_1,\dots, Q_{i-1}, Q_i \setminus \LLL, Q_{i+1} \dots, Q_m, \LLL \big)
							$$
							
						\item\label{ALG3_STEP_2b-ii} If for some $i \in \INTi{1,m}$ we have $\LLL \subseteq Q_i$ but $n \not\in Q_i$, then
							$$
								\QQQ^{(j)} := \star \big(Q_1,\dots,Q_{i-2}, Q_{i-1}\setminus \{n\},\{n\}, Q_i \setminus \LLL, Q_{i+1},\dots, Q_m, \LLL\big)
							$$
							
						\item\label{ALG3_STEP_2b-iii} If there exist $i_0,i_1 \in \INTi{1,m}$, $i_0 < i_1$, so that
							\begin{equation}\label{EQ_ALG_PERM_COND}
								\LLL \cap Q_i \neq \emptyset\mbox{ for all } i \in \INTi{i_0,i_1}
								\mbox{ and }
								\LLL \subseteq \bigcup_{i=1_0}^{i_1} Q_i
							\end{equation}
						and $n \in Q_{i_0}$, then
						$$
							\begin{array}{rll}
							\multicolumn{2}{l}{\QQQ^{(j)}: = \star\big(Q_1,\dots, Q_{i_0 -1},} & \quad\\
								\quad & \multicolumn{2}{r}{Q_{i_0}\setminus (\LLL\cup \{n\}), \{n\}, Q_{i_1} \setminus \LLL, Q_{i_1 + 1},\dots, Q_m,\quad}\\
								\multicolumn{3}{r}{Q_{i_0} \cap \LLL, Q_{i_0+1},\dots, Q_{i_1-1}, Q_{i_1}\cap \LLL\big).}
							\end{array}
						$$
				
						\item\label{ALG3_STEP_2b-iv} If \eqref{EQ_ALG_PERM_COND} holds but $n \not\in Q_{i_0}$, then
						$$
							\begin{array}{rll}
							\multicolumn{2}{l}{\QQQ^{(j)} := \star\big(Q_1,\dots,Q_{i_0-2},} & \quad\\
							\quad &\multicolumn{2}{r}{Q_{i_0-1} \setminus \{n\}, \{n\}, Q_{i_1} \setminus \LLL, Q_{i_1 + 1},\dots, Q_m,\quad} \\
								\multicolumn{3}{r}{Q_{i_0}, Q_{i_0+1},\dots, Q_{i_1-1}, Q_{i_1}\cap \LLL\big).}
							\end{array}
						$$
			\end{enumerate}
		
	\end{enumerate}
	\end{algo}
		
	\subsection{Permutation Examples}
	
	We end with two related example runs of Algorithm \ref{ALGO_ZORICH_PERM}.
	
	\begin{exam}
		Consider the four $5\times 5$ matrices
			$$
				\tilde{A}_1 = \mtrx{
								1 & 1 & 0 & 0 & 0\\
								0 & 0 & 1 & 0 & 0\\
								0 & 0 & 0 & 1 & 0\\
								0 & 0 & 0 & 0 & 1\\
								0 & 1 & 0 & 0 & 0},\quad
				\tilde{A}_2 = \mtrx{
								1 & 0 & 0 & 0 & 0\\
								0 & 1 & 0 & 0 & 0\\
								0 & 0 & 1 & 0 & 0\\
								0 & 0 & 0 & 1 & 0\\
								1 & 0 & 0 & 0 & 1},
			$$
			$$
				\tilde{A}_3 = \mtrx{
								1 & 1 & 1 & 0 & 0\\
								0 & 0 & 0 & 1 & 0\\
								0 & 0 & 0 & 0 & 1\\
								0 & 1 & 0 & 0 & 0\\
								0 & 0 & 1 & 0 & 0},\quad
				\tilde{A}_4 = \mtrx{
								1 & 0 & 0 & 0 & 0\\
								0 & 1 & 0 & 0 & 0\\
								0 & 0 & 1 & 0 & 0\\
								0 & 0 & 0 & 1 & 0\\
								1 & 0 & 1 & 0 & 1}.
			$$
		The fourth move is type $t_4=0$ with $\LLL_4 = \{1,3\}$, so we follow Step \newstepref{ALG3_STEP_1a} to get
			$$
				\QQQ^{(4)} = \big(\{2,4,5\}, \{1,3\}\big).
			$$
		The third move is type $t_3 = 1$ with $k_3 = 1$ and power $p_3 = 2$, so we follow Step \newstepref{ALG3_STEP_2a}
			to get
			$$
				\QQQ^{(3)} = \big(\delta_1^2(\{2,4,5\}), \delta_1^2(\{1,3\})\big)
					= \big(\{2,3,4\},\{1,5\}\big)
			$$
		The second move is type $t_2 =0$ with $\LLL_2 = \{1\}$, and we follow Step \newstepref{ALG3_STEP_2b-i}
			to get
			$$
				\QQQ^{(2)} = \big(\{2,3,4\},\{5\},\{1\}\big)
			$$
		The first move is type $t_1=1$ with $k_1 = 1$ and power $p_1$, so we follow Step \ref{ALG3_STEP_2a}
			to get
			$$
				\QQQ^{(1)} = \big(\delta_1(\{2,3,4\}), \delta_1(\{5\}), \delta_1(\{1\})\big)
					= \big(\{2,3,5\}, \{4\},\{1\}\big).
			$$
		We conclude that this Zorich path begins with $\pi\in \IRR{5}$ satisfying $\pi(1) = 5$ and $\pi(4) = 4$.
		In this case all six possible orderings of $\{2,3,5\}$ are still possible as $\pi(1)=5$ ensures any choice will
			produce an irreducible permutation.
	\end{exam}
	
	\begin{exam}
		If we have the same first four matrices followed by
			$$
				\tilde{A}_5 = \mtrx{
								1 & 1 & 1 & 1 & 0\\
								0 & 0 & 0 & 0 & 1\\
								0 & 1 & 0 & 0 & 0\\
								0 & 0 & 1 & 0 & 0\\
								0 & 0 & 0 & 1 & 0},\quad
				\tilde{A}_6 = \mtrx{
								1 & 0 & 0 & 0 & 0\\
								0 & 1 & 0 & 0 & 0\\
								0 & 0 & 1 & 0 & 0\\
								0 & 0 & 0 & 1 & 0\\
								1 & 1 & 1 & 0 & 1},
			$$
			we now get the following by Algorithm \ref{ALGO_ZORICH_PERM}:
				$$
					\begin{array}{ll}
					\QQQ^{(6)} = \{\{4,5\},\{1,2,3\}\}, &\mbox{by Step }\newstepref{ALG3_STEP_1a},\\
					\QQQ^{(5)} = \{\{2,5\}, \{1,3,4\}\}, & \mbox{by Step }\newstepref{ALG3_STEP_2a},\\
					\QQQ^{(4)} = \{\{2\},\{5\},\{4\},\{1,3\}\},  & \mbox{by Step }\newstepref{ALG3_STEP_2b-ii},\\
					\QQQ^{(3)} = \{\{4\},\{3\},\{2\},\{1,5\}\},  & \mbox{by Step }\newstepref{ALG3_STEP_2a},\\
					\QQQ^{(2)} = \{\{4\},\{3\},\{2\},\{5\},\{1\}\},  & \mbox{by Step }\newstepref{ALG3_STEP_2b-i},\\
					\QQQ^{(1)} = \{\{3\},\{2\},\{5\},\{4\},\{1\}\},  & \mbox{by Step }\newstepref{ALG3_STEP_2a},\\
					\end{array}
				$$
			and we have the unique initial permutation $\pi\in \IRR{5}$ satisfying
				$$
					\pi(1) = 5, ~\pi(2) = 2,~\pi(3) = 1,~\pi(4) = 4,~\pi(5) = 3,
				$$
			which was one of the six possibilities from the previous example.
		\end{exam}

\bibliographystyle{abbrv}
\bibliography{bibfile}

\end{document}